 \def\QEDhereeqn{\eqno\let\eqno\relax\let\leqno\relax\let\veqno\relax\hbox{\QED}}
 \def\QEDopenhereeqn{\eqno\let\eqno\relax\let\leqno\relax\let\veqno\relax\hbox{\QEDopen}}
 \newcommand{\bs}{\boldsymbol}
 \newcommand{\mc}{\mathcal}
 \renewcommand{\emph}{\textit}
 \newcommand{\0}{\bs 0}
 \def\1{{\bs 1}}
\def\min{\mathop{\rm min}}
 \newcommand{\col}{\mathrm{col}}
 \def\diag{\mathop{\hbox{\rm diag}}}
 \def\R{\mathbb{R}}
 \def\N{\mathbb{N}}
 \DeclareSymbolFontAlphabet{\mathbbm}{bbold}
 \DeclareSymbolFontAlphabet{\mathbb}{AMSb}%
	 \newcommand\tsup[2][2]{%
	 	\def\useanchorwidth{T}%
	 	\ifnum#1>1%
	 	\stackon[-.5pt]{\tsup[\numexpr#1-1\relax]{#2}}{\scriptscriptstyle\sim}%
	 	\else%
	 	\stackon[.5pt]{#2}{\scriptscriptstyle\sim}%
	 	\fi%
	 }
\newacronym{KKT}{KKT}{Karush--Kuhn--Tucker}
\newacronym{ADMM}{ADMM}{alternating direction method of multipliers}
\newacronym{OPF}{OPF}{optimal power flow}
\newacronym{OPFP}{OPFP}{optimal power flow problem}
\newacronym{NUM}{NUM}{network utility maximization}
\newacronym{LMI}{LMI}{linear matrix inequality}
\newacronym{BMI}{BMI}{bilinear matrix inequality}
\newacronym{LM}{LM}{Lyapunov-Metzler}
\newacronym{SDP}{SDP}{semidefinite programming}
\newacronym{LTI}{LTI}{linear time invariant}
\newacronym{MJLS}{MJLS}{Markov jump linear system}
\newacronym{PID}{PID}{proportional-integral-derivative}
\newacronym{PPA}{PPA}{proximal-point algorithm}
\newacronym{PPPA}{PPPA}{preconditioned proximal-point algorithm}
\newacronym{PPP}{PPP}{preconditioned proximal-point}
\newacronym{NE}{NE}{Nash equilibrium}
\newacronym{GNE}{GNE}{generalized Nash equilibrium}
\newacronym{v-GNE}{v-GNE}{variational GNE}
\newacronym{ISS}{ISS}{input-to-state stability}
\let\cl@part\relax \makeatother
  \let\k\relax
 \def\k{{k \in \N}}  
 \crefname{thm}{Theorem}{Theorems}
 \crefname{lem}{Lemma}{Lemmas}
 \crefname{cor}{Corollary}{Corollaries}
 \crefname{rem}{Remark}{Remarks}
 \crefname{alg}{Algorithm}{Algorithm}
 \crefname{figure}{Figure}{Figures}
 \crefname{assumption}{Assumption}{Assumptions}
 \crefname{corollary}{Corollary}{Corollaries}
\crefname{proposition}{Proposition}{Propositions}
\crefname{problem}{Problem}{Problems}
\crefname{example}{Example}{Examples}
\crefname{definition}{Definition}{Definition}
\crefname{condition}{C}{C}
 \crefname{thmlisti}{Theorem}{Theorem}
 \crefname{lemlisti}{Lemma}{Lemma}
 \crefname{asmlisti}{Assumption}{Assumptions}
 \newlist{thmlist}{enumerate}{1}
 \setlist[thmlist]{label=(\roman{thmlisti}), ref=\thethm(\roman{thmlisti}),noitemsep}
 \newlist{lemlist}{enumerate}{1}
 \setlist[lemlist]{label=(\roman{lemlisti}), ref=\thelem(\roman{lemlisti}),noitemsep}
 \newlist{asmlist}{enumerate}{1}
 \setlist[asmlist]{label=(\roman{asmlisti}), ref=\theassumption(\roman{asmlisti}),noitemsep,nosep,leftmargin=*} 
 \newtheorem{lemma}{Lemma}
 \newtheorem{theorem}{Theorem}
 \newtheorem{remark}{Remark}
 \newtheorem{assumption}{Assumption}
\newtheorem{definition}{Definition}
\patchcmd{\smallmatrix}{\thickspace}{\kern.5em}{}{}
\newenvironment{smallbmatrix}
  { \left[\begin{smallmatrix}}
  {\end{smallmatrix}\right]}
\DeclareSymbolFont{myletters}{OML}{ztmcm}{m}{it}
\DeclareMathSymbol{\uplambda}{\mathord}{myletters}{"15}
\begin{document}

{\title{
Linear convergence in time-varying generalized Nash equilibrium problems
}

\author{Mattia Bianchi, Emilio Benenati, Sergio Grammatico
\thanks{
Emilio Benenati and Sergio Grammatico are with the the Delft Center for Systems and Control, TU Delft, The Netherlands. Mattia Bianchi is with the Automatic Control Laboratory, ETH Z\"{u}rich, Switzerland. 
E-mail addresses: \texttt{\{e.benenati, s.grammatico\}@tudelft.nl, mbianch@ethz.ch}.
 This work was  supported by the ERC under research project COSMOS (802348).
}
}

\maketitle
\thispagestyle{empty}
\pagestyle{empty}

\begin{abstract} 
We study generalized games with full row rank equality constraints and we provide a strikingly simple proof of strong monotonicity of the associated KKT operator. This allows us to show linear convergence to a variational equilibrium of the resulting primal-dual pseudo-gradient dynamics. Then, we propose a fully-distributed algorithm with linear convergence guarantee for aggregative games under partial-decision information. 
Based on these results, we establish stability properties for online GNE seeking in games with time-varying cost functions and constraints. Finally, we illustrate our findings numerically on an economic dispatch problem for peer-to-peer energy markets.
\end{abstract}


\section{Introduction}\label{sec:introduction}

\Gls{GNE} problems arise in many multi-agent applications, where the agents are coupled  not only because of their conflicting objectives, but also via shared constraints --operational limits of the  system, that the agents should respect. Among others, \gls{GNE} seeking is used in  energy markets \cite{BelgioiosoAnanduta_TSG2022}, radio communication \cite{Facchinei_Kanzow_2010} and formation control \cite{Simaan_AUT_2019} problems. 

The networked structure of these applications naturally calls for distributed solution methods. In fact, part of the recent literature focuses on semi-decentralized \gls{GNE} seeking algorithms \cite{Belgioioso_Grammatico_aggregative_TAC_2021,Paccagnan_NashAndWardrop_TAC2019,Benenati_CDC2022}, where the agents update their decision locally, with the help of  a  coordinator that gathers and broadcasts information over the systems (a setup also named full-information scenario). Other works \cite{Gadjov_Pavel_aggregative_2019,Pavel_GNE_TAC_2020,Bianchi_GNEPPP_AUT_2022,Deng_Nian_Aggregative_TNNLS_2019} deal with applications where the agents can only rely on fully-distributed peer-to-peer communication and local data. In this so-called partial-decision information scenario, the agents compensate for the lack of global knowledge by estimating the unknown quantities and by embedding consensus dynamics in their local decision processes.

In both scenarios, to cope with the presence of coupling constraints and to distribute the computation among the agents, one should resort to Lagrangian reformulations.  In fact, all the references  above leverage  primal-dual pseudo-gradient algorithms, aimed at solving the \gls{KKT} optimality  conditions of the \gls{GNE} problem.

In general, primal-dual  algorithms fails to achieve linear convergence, even for the class of strongly-monotone generalized games \cite{Belgioioso_Grammatico_aggregative_TAC_2021}. Importantly, together with linear convergence,  some crucial  \gls{ISS} properties of  pseudo-gradient iterations are also not guaranteed. This lack of robustness is a critical issue for methods in the partial-decision information scenario, where convergence should be ensured despite the estimation error. To overcome this complication,  vanishing step sizes can be used to drive the error to zero \cite{Belgioioso_Nedic_Grammatico_2020}, at the price of slow convergence. Alternatively, several fixed-step algorithms for \gls{GNE} seeking were derived based on operator-theoretic methods and on the use of preconditioning \cite{Pavel_GNE_TAC_2020,Bianchi_GNEPPP_AUT_2022,Huang_Hu_DR_CDC_2021,Gadjov_Pavel_aggregative_2019}. Unfortunately, this approach comes with important limitations, such as extending the analysis to time-varying setups. 

For instance, 
there is no available fixed-step fully-distributed method to solve \gls{GNE} problems when the agents can only exchange information over switching communication networks (while methods are available for games without coupling constraints \cite{Bianchi_TV_LCSS_2021,Gadjov_adversaries_CDC_2022}). 
Furthermore, in many decision processes with real-world applications, the cost functions of the agents and the  system constraints can vary over time \cite{DallAnese_Optimization_SPM_2020}, for instance in cognitive radio networks and demand response in smart grids \cite{Basar_Confluence_CSM_2022}.
In such domains, linearly convergent algorithms become particularly desirable, as the solver needs to quickly update its solution in response to changes in the environment.  Despite its practical relevance, there are very few works that study the online \gls{GNE} problem. The paper \cite{su_Online_AIS_2021} proposes a regularized algorithm,  which only achieves inexact convergence, and which is not fully-distributed. Instead, the authors of \cite{Lu_Online_TAC_2021}  develop an algorithm for the  partial-decision information scenario, but that 
achieves sublinear regret only when the solution is asymptotically constant and for diminishing step sizes. 

\emph{Contribution:} In this paper we study  generalized
games with full row rank coupling equality constraints --as those arising in resource allocation and transportation problems  \cite{Stein_transportationgames_EJOR_2018}, where demand-matching \cite{Deng_Nian_Aggregative_TNNLS_2019} and flow \cite{BelgioiosoAnanduta_TSG2022} constraints are ubiquitous.  For the first time, we show that, in this setup, linear convergence to a \gls{GNE} can be achieved via primal-dual dynamics, both for the full- and partial-decision information scenario. Thanks to this result, we can also adapt the dynamics to online equilibrium seeking in time-varying games. Here we focus on the prominent class of aggregative games \cite{Belgioioso_Grammatico_aggregative_TAC_2021}, for its desirable scalability properties, but the analysis carries over to generally-coupled costs.  We summarize the novelties of our work  as follows:

\begin{enumerate}
    \item We provide a simple, constructive proof of the strong monotonicity of the \gls{KKT} operator in games with full-row rank equality coupling constraints. As a consequence, we show linear convergence to a \gls{GNE} of the
    pseudo-gradient ascent-descent method (Section~\ref{sec:Linearconvergence});
    \item We design a linearly convergent algorithm for \gls{GNE} seeking in partial-decision information, via a tracking technique \cite{Belgioioso_Nedic_Grammatico_2020} that avoids the need for slack variables. Our proof is based on a change of coordinates and a small gain argument: due to its generality, the argument also applies to the case of (Q-connected) time-varying communication  graphs (Section~\ref{sec:partial_info});
    \item We exploit our linear convergence results to study the tracking properties of the proposed methods with respect to the solution of a game with time-varying costs and constraints. In particular, for the fully-distributed algorithm, we show that the extra error induced in the dynamic tracking procedure does not jeopardize stability (Section~\ref{sec:timevarying}). 
    \end{enumerate}

\medskip
\emph{Notation:}
$\0_n$ ($\1_n$) denotes the vector of dimension $n$ with all elements equal to $0$ ($1$); $I_n$   the identity matrix of dimension $n$; the subscripts are omitted when there is no ambiguity. If $A$ is symmetric,   $\uplambda_{\textnormal{min}}(A)=\uplambda_1(A)\leq\dots\leq\uplambda_n(A)=:\uplambda_{\textnormal{max}}(A)$ denote its eigenvalues.
$\otimes$ denotes the Kronecker product.
$\diag(A_1,\dots,A_N)$ denotes the block diagonal matrix with $A_1,\dots,A_N$ on its diagonal; $\col\left(x_1,\ldots,x_N\right) = [ x_1^\top \ldots  x_N^\top ]^\top$. For a positive definite symmetric matrix $P\succ 0$, $\langle x, y \rangle_P = x^\top P y$ denotes the $P$-weighted inner product, $\|x\|_P$ the corresponding norm; we omit the subscript if $P=I$. An operator $\mathcal{F} : \R^n \rightarrow \R^n$ is
($\mu$-strongly) monotone in $\mc{H}_P$ if, for any $x,y\in\R^n$, $\langle \mc{F}(x)-\mc{F}(y),x-y \rangle_P \geq 0 (\geq \mu \|x-y\|_P^2)$; is contractive in $\mc{H}_P$ if it is Lipschitz with constant smaller than 1, i.e., for some $\ell<1$ and for any $x,y\in\R^n$, $\|\mc{F}(x)-\mc{F}(y)\|_P \leq \ell \| x-y\|_P$; we omit the indication ``in $\mc{H}_P$'' if $P = I$. If $\mc{F}$ is differentiable, $D\mc{F}$ denotes its Jacobian.

\section{Mathematical setup}

We consider a set of  agents, $ \mc I\coloneqq \{ 1,\ldots,N \}$, where each agent $i\in \mc{I}$ shall choose its decision variable (i.e., strategy) $x_i \in \R^{n_i}$.  Let $x \coloneqq  \col( (x_i)_{i \in \mc I})  \in \R^n $ denote the stacked vector of all the agents' decisions, with $\textstyle n\coloneqq \sum_{i=1}^N n_i$.
The goal of each agent $i \in \mc I$ is to minimize its objective function $J_i(x_i,x_{-i})$, which depends on both the local variable $x_i$ and on the decision variables of the other agents $x_{-i}\coloneqq  \col( (x_j)_{j\in \mc I\backslash \{ i \} })$.
Furthermore, the feasible decisions of each agent depends   on the action of the other agents via affine equality coupling constraints. Specifically, the feasible set is $\mc{X} \coloneqq \left\{x \in \R^{n} \mid Ax =  b \right\}$,
	where $A\coloneqq \left[A_{1}, \ldots, A_{N}\right]$ and $b\coloneqq \sum_{i=1}^{N} b_{i}$,  $A_{i} \in \R^{m \times n_{i}}$ and $b_{i} \in \R^{m}$ being locally available information. The game is then represented by the inter-dependent optimization problems:
	\begin{align} \label{eq:game}
	(\forall i \in \mathcal{I})
	\  \min_{ y_i \in \R^{ {n_{i}}}}   \; J_i(y_i,x_{-i}) \quad
	\text{s.t.}  \  (y_i,x_{-i}) \in \mathcal X.
	\end{align}
	The technical problem we consider here is the computation of a \gls{GNE}, namely a set of decisions that simultaneously solve all the optimization problems in  \eqref{eq:game}.
	\begin{definition}
		A collective strategy $x^{\star}=\operatorname{col}\left((x_{i}^{\star}\right)_{i \in \mathcal{I}})$ is a generalized Nash equilibrium if, for all $i \in \mc{I}$,
	$
		J_{i}\left(x_i^{\star}, x_{-i}^{\star}\right)\leq \inf \{J_{i}\left(y_{i}, x_{-i}^{\star}\right) \mid (y_i,x_{-i}^{\star})\in\mc{X} \}.
		$ \hfill $\square$
	\end{definition}
	Next, we postulate some standard regularity and convexity
	assumptions for the constraint sets and cost functions. 
	\begin{assumption}[Convexity]\label{asm:Convexity}
		In \eqref{eq:game}, $\mc{X}$ is non-empty. For each $i\in \mathcal{I}$,  $J_{i}$ is continuous and $J_{i}\left(\cdot, x_{-i}\right)$ is convex and continuously differentiable for every $x_{-i}$.
		{\hfill $\square$} \end{assumption}
	As common in the literature \cite{Parise_Almost_TCNS_2020}, \cite{Belgioioso_Grammatico_aggregative_TAC_2021}, among all the \glspl{GNE}, we focus on the subclass of \glspl{v-GNE} \cite[Def.~3.11]{Facchinei_Kanzow_2010}, which are more economically justifiable, as well as computationally tractable  \cite{Kulkarni_Shanbag_2012}.
    Under Assumption~\ref{asm:Convexity} and defining the \emph{pseudo-gradient} mapping of the game
	\begin{align}
	\label{eq:pseudo-gradient}
	F(x)\coloneqq \operatorname{col}\left( (\nabla _{\!x_i} J_i(x_i,x_{-i}))_{i\in\mathcal{I}}\right),
	\end{align}
	$x^{\star}$ is a \gls{v-GNE} of the game in \eqref{eq:game} if and only if there exists a dual variable $\lambda^{\star}\in \R^m $ such that the following \gls{KKT} conditions are satisfied \cite[Th.~4.8]{Facchinei_Kanzow_2010}:
	\begin{align} \label{eq:KKT}
	\begin{aligned}{\0_{n}} & \in F\left(x^{\star}\right)+A^\top \lambda^{\star}
	\\
	{\0_{m}}                 
    & \in-\left(A x^{\star}-b\right).\end{aligned}
	\end{align}
Let us restrict our attention to strongly monotone games. 
	\begin{assumption}[Strong monotonicity]\label{asm:StrMon}
		The game mapping $F$ in \eqref{eq:pseudo-gradient}  is $\mu_F$-strongly monotone and $\ell_F$-Lipschitz continuous, for some $\mu_F$, $\ell_F>0$.
		\hfill $\square$
	\end{assumption}
The strong monotonicity of $F$ is sufficient to ensure existence and uniqueness of a \gls{v-GNE} \cite[Th. 2.3.3]{Facchinei_Pang_2007}; it implies strong convexity of each function $J_i(\cdot,x_{-i})$ for any fixed $x_{-i}$, but not joint convexity of the function $J_i(\cdot)$. We emphasize that strong monotonicity is a very common condition for algorithms with linear convergence. In addition, we make the following assumption. 

\begin{assumption}[Full rank constraints]\label{asm:constraintrank}
    $A$ is full row rank. $AA^\top  \geq {\mu_A} I$, $\|A\| \leq \ell_A$ for some scalars $\mu_A, \ell_A>0$. \hfill $\square$ 
\end{assumption}
Assumption~\ref{asm:constraintrank} postulates that there are no redundant constraints (or equivalently that redundant constraints are removed). This condition is well known in duality theory and optimization, as it ensures the uniqueness of dual solutions (as it can be  inferred by \eqref{eq:KKT}).

\subsection{Aggregative games}

For ease of presentation, we will specialize our results to the prominent class of aggregative games\footnote{Similar results would hold for generally-coupled games. We note that, in principle, one could set $\sigma(x)=x$ by opportunely choosing the functions $\phi_i$'s in \eqref{eq:def_sigma}}, which arises in a variety of engineering applications, e.g.,  network congestion control and demand-side management \cite{Grammatico_2017}. In particular, we assume that the cost function $J_i$ of each agent $i$ depends only on the  local decision $x_i$ and on an aggregation value
\begin{align}\label{eq:def_sigma}
    \sigma(x) = \frac{1}{N} \sum \phi_i(x_i),
\end{align}
where $\phi_i:\R^{n_i} \rightarrow \R^{q}$ is a local function of agent $i$. In short,  overloading the function $J_i$ with some abuse of notation, we also write 
\begin{align}
    J_i(x_i,x_{-i}) = J_i(x_i,\sigma(x)).
\end{align}
\begin{assumption}\label{asm:aggregation}
For each $i \in \mc{I}$, the function $\phi_i$ in \eqref{eq:def_sigma} is continuously differentiable and $\ell_\sigma$-Lipschitz continuous.
\hfill $\square$
\end{assumption}

\section{Linear convergence in generalized games}
\label{sec:Linearconvergence}

We start by showing that the \gls{KKT} operator is strongly monotone, in a suitable norm, under Assumptions~\ref{asm:StrMon} and \ref{asm:constraintrank}. 
\begin{lemma} \label{lem:KKT_strong_mon}
Let $\omega \coloneqq \col(x,\lambda)$. The operator 
\begin{align}
\omega \mapsto \mc{A}(\omega) =
\begin{bmatrix}
 F(x) + A^\top \lambda
 \\
 -Ax+b  
\end{bmatrix}
\end{align}
is $\mu_{\mc{A}}$-strongly monotone in $\mc{H}_P$, for some $P \succ 0$ and $\mu_{\mc{A}}>0$. \hfill $\square$
\end{lemma}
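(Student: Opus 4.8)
The plan is to exhibit an explicit preconditioner $P\succ 0$ and to verify strong monotonicity by a direct expansion. The motivating observation is that with the trivial metric $P=I$ the operator is only monotone: the coupling blocks $A^\top\lambda$ and $-Ax$ are skew-symmetric and cancel in the inner product, so that $\langle \mc{A}(\omega)-\mc{A}(\omega'),\omega-\omega'\rangle$ collapses to $\langle F(x)-F(x'),x-x'\rangle\ge\mu_F\|x-x'\|^2$, which is degenerate in the dual direction. To inject strong monotonicity in $\lambda$, I would introduce an off-diagonal coupling in the metric, choosing $P=\bigl[\begin{smallmatrix} I & cA^\top\\ cA & I\end{smallmatrix}\bigr]$ for a scalar $c>0$. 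By the Schur complement and \cref{asm:constraintrank} ($AA^\top\preceq\ell_A^2 I$), one has $P\succ 0$ whenever $c\,\ell_A<1$, so that $\|\cdot\|_P$ is a genuine norm equivalent to $\|\cdot\|$.

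Next I would expand $\langle \mc{A}(\omega)-\mc{A}(\omega'),\omega-\omega'\rangle_P$. Writing $\Delta x:=x-x'$, $\Delta\lambda:=\lambda-\lambda'$, and $\Delta F:=F(x)-F(x')$, the skew-symmetric coupling cancels exactly as in the $P=I$ case, while the off-diagonal blocks of $P$ produce the decisive new term $c\,\langle\Delta\lambda,AA^\top\Delta\lambda\rangle=c\,\|A^\top\Delta\lambda\|^2$. Invoking \cref{asm:StrMon} and \cref{asm:constraintrank} then gives the lower bound $\langle \mc{A}(\omega)-\mc{A}(\omega'),\omega-\omega'\rangle_P \ge \mu_F\|\Delta x\|^2 + c\,\mu_A\|\Delta\lambda\|^2 - c\,\ell_A^2\|\Delta x\|^2 + c\,\Delta F^\top A^\top\Delta\lambda$, where the coefficient $c\,\mu_A\|\Delta\lambda\|^2$ is precisely what strong monotonicity in the dual variable demands, and is made available exactly by the full-row-rank hypothesis $AA^\top\succeq\mu_A I$.

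The only indefinite quantity remaining is the cross term $c\,\Delta F^\top A^\top\Delta\lambda$, which I would control by Young's inequality: using $\ell_F$-Lipschitz continuity of $F$ and $\|A\|\le\ell_A$, one has $|\Delta F^\top A^\top\Delta\lambda|\le \ell_F\ell_A\|\Delta x\|\,\|\Delta\lambda\|\le \ell_F\ell_A\bigl(\tfrac{\varepsilon}{2}\|\Delta x\|^2+\tfrac{1}{2\varepsilon}\|\Delta\lambda\|^2\bigr)$ for any $\varepsilon>0$. Substituting turns the bound into a quadratic form in $(\|\Delta x\|,\|\Delta\lambda\|)$ whose diagonal coefficients are $\mu_F-c\bigl(\ell_A^2+\tfrac{\varepsilon}{2}\ell_F\ell_A\bigr)$ and $c\bigl(\mu_A-\tfrac{1}{2\varepsilon}\ell_F\ell_A\bigr)$.

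The final step is the parameter selection, which I regard as the main (though mild) obstacle, since the two coefficients pull $c$ and $\varepsilon$ in opposite directions. The correct order is to first fix $\varepsilon>\ell_F\ell_A/(2\mu_A)$, making the dual coefficient strictly positive, and only then take $c>0$ small enough that both $c\,\ell_A<1$ (for $P\succ 0$) and $c\bigl(\ell_A^2+\tfrac{\varepsilon}{2}\ell_F\ell_A\bigr)<\mu_F$, which preserves positivity of the primal coefficient. With both coefficients positive and bounded below by some $a>0$, I obtain $\langle \mc{A}(\omega)-\mc{A}(\omega'),\omega-\omega'\rangle_P\ge a\bigl(\|\Delta x\|^2+\|\Delta\lambda\|^2\bigr)\ge \tfrac{a}{\uplambda_{\textnormal{max}}(P)}\|\omega-\omega'\|_P^2$, which is the claim with $\mu_{\mc{A}}=a/\uplambda_{\textnormal{max}}(P)$.
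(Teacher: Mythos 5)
Your proposal is correct and follows essentially the same route as the paper: the same off-diagonal preconditioner $P=\bigl[\begin{smallmatrix} I & cA^\top\\ cA & I\end{smallmatrix}\bigr]$, the same cancellation of the skew-symmetric blocks, and the same lower bound $\mu_F\|\Delta x\|^2 - c\,\ell_A^2\|\Delta x\|^2 + c\,\mu_A\|\Delta\lambda\|^2$ plus a cross term, which the paper handles by checking positive definiteness of the resulting $2\times 2$ quadratic form while you equivalently use Young's inequality with a free parameter. If anything, your write-up is slightly more careful on two minor points the paper glosses over: the condition $c\,\ell_A<1$ for $P\succ 0$ (the paper's stated range $0<\nu<\ell_A$ looks like a typo) and the explicit constant $\mu_{\mc A}=a/\uplambda_{\textnormal{max}}(P)$ in place of ``equivalence of norms.''
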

\begin{proof}
For some   $0 < \nu < \ell_A$, let 
\begin{align} \label{eq:def_P}
P = 
\begin{bmatrix}
 I & \nu A^\top 
 \\
\nu A &  I
\end{bmatrix} \succ 0. 
\end{align}
For any $x,x^\star \in \R^n$ and $\lambda,\lambda^\star \in \R^m$, we have
\begin{align*}
    & \hphantom{{}={}}
    \langle  \mc{A}(\omega)-\mc{A}(\omega^\star), \omega- \omega ^\star\rangle_ P 
    \\
     & = 
    \langle F(x)-F(x^\star) , x- x ^\star \rangle  
    \\
     & \hphantom{{}={}} 
      +\nu\langle F(x)-F(x^\star) , A^\top (\lambda-\lambda^\star) \rangle 
     \\
       & \hphantom{{}={}} 
     + \cancel{ \langle A^\top (\lambda-\lambda^\star), x-x^\star \rangle}
     \\
     & \hphantom{{}={}} 
     + \nu \langle A^\top (\lambda-\lambda^\star), A^\top(\lambda-\lambda^\star) \rangle
     \\
    & \hphantom{{}={}} 
    -\nu \langle  A(x-x^\star), A(x-x^\star) \rangle 
    \\
    & \hphantom{{}={}}
    - \cancel{\langle  A(x-x^\star), \lambda-\lambda^\star \rangle}
    \\
    & \geq 
    (\mu_F - \nu\ell_A^2) \|x-x^\star \|^2
    \\
    & \hphantom{{}={}}
    -\nu \ell_F \ell_A \|x-x^\star\| \|\lambda-\lambda^\star \| 
    \\ 
    & \hphantom{{}={}}
    +\nu \mu_A  \| \lambda-\lambda^\star\|^2
\end{align*}
which is positive definite for $0< \nu < \frac{4\mu_F \mu_A }{\ell_F^2\ell_A^2 +{4}\mu_A \ell_A^2} $. The conclusion follows by equivalence of norms.
\end{proof}
Based on Lemma \ref{lem:KKT_strong_mon} we can prove linear convergence of  classic primal-dual iterations.  
\begin{theorem}[GNE seeking in full-decision information]
\label{th:1} Let Assumptions~\ref{asm:Convexity}-\ref{asm:constraintrank} hold and $\ell_{\mc{A}} := (\ell_F+\ell_A) {\sqrt{\frac{\uplambda_{\max}(P)}{\uplambda_{\min} (P)}}}$. For any $0 < \alpha < \frac{2 \mu_{\mc{A}}}{\ell_{\mc{A}}^2}$ and any initial condition $(x^0,\lambda^0)$,  the iteration
\begin{subequations}\label{eq:algorithm_full}
\begin{align}\label{eq:forward}
    x^{k+1} &= x^k- \alpha\left(F(x^k)+A^\top \lambda^k \right)
    \\
    \lambda^{k+1} &= \lambda^k + \alpha \left (Ax^k-b \right)
    \end{align}
\end{subequations}
converges linearly to the unique solution $\omega^\star = (x^\star,\lambda^\star)$ of the \gls{KKT} conditions in \eqref{eq:KKT}: for all $k\in \mathbb{N}$
\begin{align}
\begin{split}\label{eq:contraction}
    \| \omega^{k+1} - \omega^\star \|^2_P \leq \rho \| \omega^{k} - \omega^\star \|^2_P,
\end{split}
\end{align}
where $\rho = 1 - {2}\alpha \mu_{\mc{A}}+\alpha^2 \ell_{\mc{A}}^2 <1$. 
\hfill $\square$
\end{theorem}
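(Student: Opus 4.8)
The plan is to read \eqref{eq:algorithm_full} as the explicit forward step $\omega^{k+1}=\omega^k-\alpha\,\mc{A}(\omega^k)$ on the KKT operator of Lemma~\ref{lem:KKT_strong_mon}, and then to run the standard contraction estimate for a forward iteration of a strongly monotone, Lipschitz operator, but carried out entirely in the $P$-weighted geometry $\mc{H}_P$. First I would observe that the KKT system \eqref{eq:KKT} is precisely $\0=\mc{A}(\omega^\star)$, so $\omega^\star$ is a fixed point of the map $\omega\mapsto\omega-\alpha\mc{A}(\omega)$; since Lemma~\ref{lem:KKT_strong_mon} makes $\mc{A}$ $\mu_{\mc{A}}$-strongly monotone in $\mc{H}_P$, its zero set is a singleton, which settles the uniqueness of $\omega^\star$ (existence of the $v$-GNE being already guaranteed under Assumption~\ref{asm:StrMon}).

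For the contraction, I would subtract the fixed-point identity $\omega^\star=\omega^\star-\alpha\mc{A}(\omega^\star)$ to get $\omega^{k+1}-\omega^\star=(\omega^k-\omega^\star)-\alpha\bigl(\mc{A}(\omega^k)-\mc{A}(\omega^\star)\bigr)$, and expand the squared $P$-norm:
\begin{align*}
\|\omega^{k+1}-\omega^\star\|_P^2
&=\|\omega^{k}-\omega^\star\|_P^2
-2\alpha\,\langle \mc{A}(\omega^k)-\mc{A}(\omega^\star),\,\omega^k-\omega^\star\rangle_P\\
&\quad+\alpha^2\,\|\mc{A}(\omega^k)-\mc{A}(\omega^\star)\|_P^2 .
\end{align*}
The cross term is bounded below by $\mu_{\mc{A}}\|\omega^k-\omega^\star\|_P^2$ directly from Lemma~\ref{lem:KKT_strong_mon}, and the last term by $\ell_{\mc{A}}^2\|\omega^k-\omega^\star\|_P^2$ once $\mc{A}$ is shown to be $\ell_{\mc{A}}$-Lipschitz in $\mc{H}_P$. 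Substituting yields exactly $\|\omega^{k+1}-\omega^\star\|_P^2\le(1-2\alpha\mu_{\mc{A}}+\alpha^2\ell_{\mc{A}}^2)\|\omega^k-\omega^\star\|_P^2=\rho\,\|\omega^k-\omega^\star\|_P^2$, which is \eqref{eq:contraction}.

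The main work — and the step I expect to be the real obstacle, rather than the monotonicity that Lemma~\ref{lem:KKT_strong_mon} already hands us — is pinning down the Lipschitz constant $\ell_{\mc{A}}$ in the \emph{correct} norm. I would split $\mc{A}$ into the pseudo-gradient block $\col(F(x),\0)$ and the skew-symmetric linear part $L$ mapping $\col(x,\lambda)\mapsto\col(A^\top\lambda,-Ax)$. The former is $\ell_F$-Lipschitz by Assumption~\ref{asm:StrMon}; the latter, being skew-symmetric and built from $A$, has Euclidean operator norm $\|A\|\le\ell_A$ by Assumption~\ref{asm:constraintrank}. The triangle inequality then gives $(\ell_F+\ell_A)$-Lipschitzness of $\mc{A}$ in the Euclidean norm, and converting to the $P$-norm via $\sqrt{\uplambda_{\min}(P)}\,\|\cdot\|\le\|\cdot\|_P\le\sqrt{\uplambda_{\max}(P)}\,\|\cdot\|$ inflates the constant by the factor $\sqrt{\uplambda_{\max}(P)/\uplambda_{\min}(P)}$, recovering exactly the $\ell_{\mc{A}}$ of the statement. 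Care is needed because strong monotonicity is measured in $\mc{H}_P$ while Lipschitzness is most naturally obtained in the Euclidean norm, so the two estimates must be reconciled in the same geometry.

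Finally, viewing $\rho-1=\alpha(\alpha\ell_{\mc{A}}^2-2\mu_{\mc{A}})$ as a function of $\alpha$, it is negative precisely on $0<\alpha<2\mu_{\mc{A}}/\ell_{\mc{A}}^2$, which gives $\rho<1$ on the stated step-size range; iterating \eqref{eq:contraction} then yields $\|\omega^{k}-\omega^\star\|_P^2\le\rho^k\|\omega^0-\omega^\star\|_P^2\to0$, i.e. Q-linear convergence from any initial condition, completing the proof.
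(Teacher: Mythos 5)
Your proposal is correct and follows essentially the same route as the paper: rewrite \eqref{eq:algorithm_full} as the forward step $\omega^{k+1}=\omega^k-\alpha\mc{A}(\omega^k)$, expand the squared $P$-norm, and invoke Lemma~\ref{lem:KKT_strong_mon} for the cross term and the $\ell_{\mc{A}}$-Lipschitz bound for the quadratic term. The only difference is that you spell out the derivation of $\ell_{\mc{A}}$ (triangle inequality on the pseudo-gradient and skew-symmetric blocks, then norm equivalence), which the paper leaves implicit in the definition of $\ell_{\mc{A}}$ in the theorem statement.
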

\begin{proof}
    By Lemma~\ref{lem:KKT_strong_mon}, 
$    \| \omega - \alpha\mc{A}(\omega) - (\omega ^\star- \alpha\mc{A}(\omega^\star))\|_P =
         \| \omega- \omega^\star \|^2_P -2 {\alpha} \langle \mc{A}(\omega)-\mc{A}(\omega^\star), \omega-\omega^\star \rangle_P + \alpha^2 \|\mc{A}(\omega)-\mc{A}(\omega^\star)\|_P ^2 
         \leq \rho  \|\omega- \omega^\star \|^2_P$,
and the conclusion follows because \eqref{eq:algorithm_full} can be rewritten as $\omega^{k+1} = \omega^{k}-\alpha \mc{A}(\omega^{k})$.
\end{proof}

For the case of aggregative games, \eqref{eq:forward} can be implemented in a semi-decentralized way, as in Algorithm~\ref{algo:0}.

\begin{algorithm}[t] \label{algo:0}\caption{Semi-decentralized  GNE seeking} \label{algo:0}

\emph{Iterate to convergence:} for all $k\in \mathbb{N}$,
\begin{itemize}
    \item  Each $i \in \mc{I}$: receive $\lambda^k$, $\sigma^k$ from coordinator; update
    \begin{align*}
        x_i^{k+1} & = x_i^k - \alpha \left ( \nabla_{x_i} J_i(x_i^k, {\sigma^k}) + A_i^\top {\lambda}^k \right) 
    \end{align*}
    \item Coordinator: receive $\{\phi_i({x_i}^{k+1}),A_i x_i^{k} - b_i \}_{i \in \mc{I}}$; update
      \begin{align*}
        \lambda^{k+1} & = \lambda^k + \alpha  \textstyle\sum_{i\in\mc{I}} (A_ix_i^k -b_i),  \\
        \sigma^{k+1} &= \textstyle \frac{1}{N}\sum _{i\in\mc{I}} \phi_i(x_i^{k+1})
    \end{align*}
\end{itemize}
\end{algorithm}

\section{Partial-decision information}
\label{sec:partial_info}

In this section, we  consider aggregative games in the so called-partial-decision information scenario, where there is no central coordinator, and the agents can only exchange information via peer-to-peer communication over a communication graph $\mc{G} =(\mc{I},\mc{E})$, with weight matrix $W \in \R^{N\times N}$, and $w_{i,j} \coloneqq [W]_{i,j}>0$ if and only if $(j,i)$ belongs to the set of edges $\mc{E} \subseteq \mc{I} \times \mc{I}$.
\begin{assumption}[Communication]\label{asm:graph}
The graph $\mc{G}$ is strongly connected. The weight matrix $W$ satisfies:
\begin{itemize}
    \item \emph{Double stochasticity}: $\1^\top W = \1^\top, W\1 = W \1$;
    \item \emph{Self-loops}: $w_{i,i} >0 $ for all $i\in \mc{I}$.
    \end{itemize}
We denote $\theta \coloneqq \|W- \frac{1}{N} \1 \1^\top \| <1$. 
\hfill $\square$
\end{assumption}
To remedy the lack of global knowledge, we let each agent $i\in \mc{I}$ keep:
\begin{itemize}
    \item $\bs{\sigma}_i \in \R^{q}$: estimate of the aggregation $\sigma(x)$;
    \item $\bs\lambda_i \in \R^m$: estimate of the dual variable $\lambda$;
    \item $\bs{r}_i\in \R^m$: estimate of the residual $Ax-b$;
    \item $z_i \in \R^m$: additional dual variable.
\end{itemize}
Our proposed dynamics are illustrated in Algorithm~\ref{algo:1}, where
\begin{align}
\bs{F}_i(x_i,\bs{\sigma}_i) \coloneqq 
\nabla_{x_i}J_i(x_i,\bs{\sigma}_i)+ \frac{1}{N} [D \phi_i(x_i)]^\top \nabla_{\bs{\sigma}_i} J_i(x_i,\bs\sigma_i).
\end{align} 
\begin{algorithm}[t] \caption{Fully-distributed GNE seeking} \label{algo:1}
\emph{Initialization}: choose $\alpha>0$ as in Theorem~\ref{th:2}; for all $i\in \mc{I}$, set $x_i^0 \in \R^{n_i}$, $\bs\sigma_i^0 = \phi_i(x_i^0)$, $ z_i^0 \in \R^m$, $\bs{\lambda}_i ^0 = z_i^0$, $\bs r_i^0 = A_ix_i^0 - b_i$.
\newline
\emph{Iterate to convergence}: for all $k \in \mathbb{N}$, for all $i\in \mc{I},$
\begin{itemize}
    \item  Local variables update: 
    \begin{align*}
        x_i^{k+1} & = x_i^k - \alpha \left ( \bs{F}_i(x_i, \bs{\sigma}_i^k) + A_i^\top \bs{\lambda}_i^k \right) 
        \\
        z_i^{k+1} & = z_i^k + \alpha N \bs r_i ^k
    \end{align*}
    \item Tracking: Agent $i$ exchanges the variables $(\bs\sigma_i,\bs\lambda_i,\bs r_i)$ with its neighbors, and does
    \begin{align*}
        \bs{\sigma}_i^{k+1} &= \sum_{j \in \mc{N}_i} w_{i,j} \bs{\sigma}_j^k + \phi_i(x_i^{k+1}) -\phi_i(x_i^{k})
        \\
        \bs{r}_i^{k+1} &= \sum_{j \in \mc{N}_i} w_{i,j} \bs{r}_j^k + A_i x_i^{k+1} -A_i x_i^{k} 
        \\
        \bs{\lambda}_i^{k+1} &= \sum_{j \in \mc{N}_i} w_{i,j} \bs{\lambda}_j^k + z_i^{k+1} - z_i^{k}
    \end{align*}
\end{itemize}
\end{algorithm}

Let us define $ \bs \sigma = \col((\bs\sigma_i)_{i\in\mc{I}})$, $ \bs r = \col((\bs r_i)_{i\in\mc{I}})$, $ \bs \lambda = \col((\bs\lambda_i)_{i\in\mc{I}})$ and
the extended game mapping
\begin{align}
    \bs F(x,\bs\sigma)  \coloneqq \col((\bs F_i(x_i,\bs\sigma_i))_{i\in\mc{I}}).
\end{align}
Note that $\bs F(x,\1 \otimes \sigma(x) ) = F(x)$. Furthermore, let
\begin{subequations}
    \label{eq:projected_variables}
\begin{align}
    \bar{\bs{\lambda}}  & = \textstyle \frac{1}{N}\sum_{i\in\mc{I}}\bs\lambda_i, \quad
    \tilde{\bs{\lambda}} = \bs\lambda - \1\otimes \bar{\bs{\lambda}} 
    \\
    \bar{\bs{\sigma}}  & = \textstyle \frac{1}{N}\sum_{i\in\mc{I}}\bs\sigma_i, \quad
    \tilde{\bs{\sigma}} = \bs\sigma - \1\otimes \bar{\bs{\sigma}}  
    \\
     \bar{\bs{ r}}  & = \textstyle \frac{1}{N}\sum_{i\in\mc{I}}\bs r_i, \quad
    \tilde{\bs{ r}} = \bs r - \1\otimes \bar{\bs{r}}.
\end{align}
\end{subequations}

The following lemma shows an invariance property typical of tracking dynamics as those in Algorithm~\ref{algo:1}.
\begin{lemma}\label{lem:invariance}
    For all $k \in \mathbb{N}$, it holds that $
      \bar{\bs{\lambda}}^k =  \frac{1}{N} \textstyle \sum_{i\in\mc{I}} z_i^k$,
      $\bar{\bs{r}}^k =  \frac{1}{N} \textstyle \sum_{i\in\mc{I}} (A_i x_i^k - b_i)$, $
      \bar{\bs{\sigma}}^k =  \sigma(x^k)$.
 \hfill $\square$

 \begin{proof}
     Via induction, by  the initialization and double stochasticity of $W$. 
 \end{proof}   

 To study the convergence of Algorithm~\ref{algo:1}, we first need the following crucial reformulation.
\end{lemma}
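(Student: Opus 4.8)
The plan is to establish the three identities simultaneously by induction on $k$, since they all rest on the same mechanism: the tracking updates in Algorithm~\ref{algo:1} are built so that the network average of each estimate is preserved by the consensus step and only shifted by the local increment of the quantity being tracked. For the base case $k=0$ I would read the result off from the initialization: $\bs\sigma_i^0=\phi_i(x_i^0)$ gives $\bar{\bs\sigma}^0=\frac1N\sum_{i\in\mc{I}}\phi_i(x_i^0)=\sigma(x^0)$, while $\bs r_i^0=A_i x_i^0-b_i$ and $\bs\lambda_i^0=z_i^0$ yield the other two identities directly from the definitions in \eqref{eq:projected_variables}.

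For the inductive step, assume the three identities hold at iteration $k$. Writing each tracking update in stacked form and left-multiplying by the averaging operator $\frac1N(\1^\top\otimes I)$, the consensus term collapses: by column stochasticity $\1^\top W=\1^\top$ (Assumption~\ref{asm:graph}), $\frac1N(\1^\top W\otimes I)(\cdot)^k=\frac1N(\1^\top\otimes I)(\cdot)^k=\bar{(\cdot)}^k$, so the average is left invariant by the mixing. Hence each average advances as $\bar{(\cdot)}^{k+1}=\bar{(\cdot)}^k+\frac1N\sum_{i\in\mc{I}}(\text{local increment})$. For $\bar{\bs\sigma}$ the increment is $\phi_i(x_i^{k+1})-\phi_i(x_i^k)$, which telescopes against the hypothesis $\bar{\bs\sigma}^k=\sigma(x^k)$ to give $\bar{\bs\sigma}^{k+1}=\sigma(x^{k+1})$; for $\bar{\bs r}$ the increment is $A_i x_i^{k+1}-A_i x_i^k$, giving $\bar{\bs r}^{k+1}=\frac1N\sum_{i\in\mc{I}}(A_i x_i^{k+1}-b_i)$; and for $\bar{\bs\lambda}$ the increment is $z_i^{k+1}-z_i^k$, giving $\bar{\bs\lambda}^{k+1}=\frac1N\sum_{i\in\mc{I}}z_i^{k+1}$. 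This closes the induction.

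I do not expect a genuine obstacle here; the statement is an invariance that falls out of the design of the dynamics. The only two points that require care are using the correct half of double stochasticity — it is the column-sum condition $\1^\top W=\1^\top$, not $W\1=\1$, that makes the consensus step average-preserving — and keeping the Kronecker structure consistent when the tracked quantities are vector-valued. Once these are handled, the telescoping of the increment terms against the inductive hypothesis produces the stated closed form for each average.
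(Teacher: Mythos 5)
Your proof is correct and is exactly the argument the paper compresses into its one-line proof (``via induction, by the initialization and double stochasticity of $W$''): base case from the initialization, inductive step from the column-sum condition $\1^\top W = \1^\top$ collapsing the consensus term, and telescoping of the local increments. Your remark that it is specifically the column-stochasticity half of the assumption that does the work is accurate and worth noting, but the route is the same as the paper's.
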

\begin{lemma} \label{lem:rewritten_alg}
    The iteration in Algorithm~\ref{algo:1} is equivalent to 
\begin{align}
\label{eq:algo1_compact}
\begin{aligned}
\begin{bmatrix}
    x^{k+1}
    \\
    \bar{\bs \lambda}^{k+1}   
\end{bmatrix}  \!\! = \! \underbrace{\bs{\xi}^k -\alpha \mc{A}(\bs{\xi}^k)}_{\coloneqq\mc{B}_1(\bs{\xi}^k)} 
+\!
\underbrace{\begin{smallbmatrix}
\!\alpha \left( \bs{F} (x^k,\bs{\sigma}^k) -  \bs{F} (x^k, {\bs 1 \otimes}\bar{\bs{\sigma}}^k) + \bs{A}^\top \tilde{\bs\lambda}^k \right) \!
\\
\0
\end{smallbmatrix}}_{\coloneqq\mc{B}_2(\bs \omega^k)}
\\
\begin{bmatrix}
   \tilde{\bs \sigma} ^{k+1}
    \\
    \tilde{\bs r} ^{k+1} 
    \\
    \tilde{\bs \lambda} ^{k+1} 
\end{bmatrix} 
= 
\underbrace{\begin{bmatrix}
        \bs W \tilde {\bs \sigma}^k  
        \\
        \bs W \tilde {\bs r}^k  
        \\
        \bs W \tilde {\bs \lambda}^k  
    \end{bmatrix}}_{ \coloneqq\mc{B}_3(\bs{\chi}^k)}
+ 
\underbrace{
\begin{bmatrix}
         \tilde \Pi \col( (\phi_i(x_i^{k+1})- \phi_i(x_i^{k}) 
        )_{i\in \mc{I}}) 
        \\
        \tilde \Pi \col( (A_i(x_i^{k+1}-x_i^{k})) 
        _{i\in \mc{I}})
        \\
         \alpha N \tilde{\bs r}^k 
    \end{bmatrix}}_{\coloneqq \mc{B}_4(\bs\omega^k)}
    \\[-0.7em]
    \end{aligned}
 \end{align} 
where $\bs\omega = (\bs{\xi},\bs{\chi})$, $\bs{\xi}=(x,\bar{\bs \lambda})$, $\bs{\chi} = (\tilde{\bs \sigma},\tilde{\bs r},\tilde{\bs \lambda})$, and $\tilde \Pi \coloneqq I-(\frac{1}{N} \1\1^\top)\otimes I_N$, $\bs W = W\otimes I$, ${\bs A = \mathrm{diag}((A_i)_{i\in\mc I})}$: the sequence $(x^k,\bs \sigma^k, \bs r^k, \bs\lambda^k)_{k\in\mathbb N}$ generated by Algorithm~\ref{algo:1} and the sequence   $(x^k, \tilde {\bs \sigma}^k+\1\otimes \sigma(x^k) , \tilde{\bs r}^k+\1\otimes (Ax^k-b), \bar{\bs\lambda}^k+\tilde{\bs\lambda}^k)_{k\in\mathbb N}$ generated by \eqref{eq:algo1_compact} coincide. \hfill $\square$
\end{lemma}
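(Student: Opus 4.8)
The plan is to verify \eqref{eq:algo1_compact} block by block: stack the per-agent updates of Algorithm~\ref{algo:1} into vector form, and then decompose each tracking variable into its network average and its deviation from consensus, using the orthogonal projector $\tilde\Pi$ (so that $\tilde{\bs\sigma}=\tilde\Pi\bs\sigma$, $\tilde{\bs r}=\tilde\Pi\bs r$, $\tilde{\bs\lambda}=\tilde\Pi\bs\lambda$) together with the averages $\bar{\bs\sigma},\bar{\bs r},\bar{\bs\lambda}$. Writing the local steps compactly with $\bs A=\diag((A_i)_\i)$ and $\bs W = W\otimes I$, the primal update reads $x^{k+1}=x^k-\alpha(\bs F(x^k,\bs\sigma^k)+\bs A^\top\bs\lambda^k)$, and the goal is to reduce it to the primal block of $\mc B_1$ plus the correction $\mc B_2$.

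For the $\bs\xi$-row, I would first split $\bs\lambda^k=\1\otimes\bar{\bs\lambda}^k+\tilde{\bs\lambda}^k$ and exploit the block-diagonal structure, $\bs A^\top(\1\otimes\bar{\bs\lambda}^k)=A^\top\bar{\bs\lambda}^k$, to extract the coupling term $A^\top\bar{\bs\lambda}^k$. I would then use the identity $\bs F(x^k,\1\otimes\bar{\bs\sigma}^k)=F(x^k)$, valid because $\bar{\bs\sigma}^k=\sigma(x^k)$ by Lemma~\ref{lem:invariance} and $\bs F(x,\1\otimes\sigma(x))=F(x)$, to recover the pseudo-gradient $F(x^k)$; this isolates the primal block of $\mc A(\bs\xi^k)$ and collects the remaining terms as $\mc B_2$. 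The $\bar{\bs\lambda}$-update then follows by averaging the stacked dual step $\bs\lambda^{k+1}=\bs W\bs\lambda^k+(z^{k+1}-z^k)$: double stochasticity gives $\frac1N(\1^\top\otimes I)\bs W=\frac1N(\1^\top\otimes I)$, so the mixing collapses to $\bar{\bs\lambda}^k$, while $z^{k+1}-z^k=\alpha N\bs r^k$ averages to $\alpha N\bar{\bs r}^k=\alpha(Ax^k-b)$, again by Lemma~\ref{lem:invariance}. Hence $\bar{\bs\lambda}^{k+1}=\bar{\bs\lambda}^k+\alpha(Ax^k-b)$, matching the dual block of $\mc B_1$ with zero contribution from $\mc B_2$.

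For the $\bs\chi$-row, I would apply $\tilde\Pi$ to the stacked tracking updates. The key algebraic fact is that $\tilde\Pi$ commutes with $\bs W$: since $W$ commutes with $\frac1N\1\1^\top$ under double stochasticity, one has $\tilde\Pi\bs W=\bs W\tilde\Pi$, so $\tilde\Pi\bs W\bs\sigma^k=\bs W\tilde{\bs\sigma}^k$ and likewise for $\bs r$ and $\bs\lambda$; this reproduces the $\bs W$-mixing of $\mc B_3$. The additive increments pass through $\tilde\Pi$ to yield the projected terms of $\mc B_4$, and for the dual deviation one additionally uses $\tilde\Pi(\alpha N\bs r^k)=\alpha N\tilde{\bs r}^k$. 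Finally, the stated identification of the two trajectories follows by reassembling $\bs\sigma^k=\tilde{\bs\sigma}^k+\1\otimes\sigma(x^k)$, $\bs\lambda^k=\tilde{\bs\lambda}^k+\1\otimes\bar{\bs\lambda}^k$, and the analogous expression for $\bs r^k$, all through Lemma~\ref{lem:invariance}, by an induction on $k$ anchored at the initialization of Algorithm~\ref{algo:1}.

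The computations are essentially bookkeeping; the only delicate point is keeping the average/deviation split consistent across the coupled $z$- and $\bs\lambda$-dynamics, so that the integrator variable $z$ reproduces the dual ascent of $\mc A$ in the average while its increment $\alpha N\bs r^k$ simultaneously drives the deviation $\tilde{\bs\lambda}$ through $\tilde{\bs r}$. The commutation $\tilde\Pi\bs W=\bs W\tilde\Pi$ and the invariance relations of Lemma~\ref{lem:invariance} are precisely what make every consensus term cancel cleanly.
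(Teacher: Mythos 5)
Your proposal is correct and follows essentially the same route as the paper's (much terser) proof: stacking the local updates, splitting each tracking variable into its average and consensus deviation via $\tilde\Pi$, invoking Lemma~\ref{lem:invariance} and $\bs F(x,\1\otimes\sigma(x))=F(x)$ to recover $\mc{A}$ in the $\bs\xi$-block, and using double stochasticity to handle the $\bs W$-mixing (your commutation $\tilde\Pi\bs W=\bs W\tilde\Pi$ is an equivalent variant of the paper's identity $\tilde\Pi\bs W\tilde\Pi=\bs W\tilde\Pi$). All the bookkeeping steps you describe check out.
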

\begin{proof} The update of $\bs{\chi}$ follows by noting that $\tilde{\bs \sigma} = \tilde\Pi \bs \sigma$, $\tilde {\bs r} = \tilde\Pi \bs r$, $\tilde {\bs \lambda}  = \tilde\Pi \bs \lambda$ and $ \tilde\Pi \bs W \tilde\Pi = \bs W \tilde\Pi$. 
Then, the proof follows by definition of $\mc{A}$, by using Lemma~\ref{lem:invariance}, and finally by noting that $\bs F (x^k, \1\otimes \bar{\bs \sigma}^k) = F(x^k)$ by Lemma~\ref{lem:invariance}. Note that $\bs{\sigma} = \tilde{\bs \sigma} + \1\otimes \sigma(x) $ and $\bs{r} = \tilde{\bs r} + \frac{1}{N}\otimes (Ax-b)$, which allows us to eliminate the variables $ \bar{\bs{\sigma}}$ and $ \bar{\bs{r}}$ in the iteration, and similarly for $z_i$, since $\bar{\bs \lambda}^{k+1}  = \bar{\bs \lambda}^{k} + \frac{1}{N} \sum_{i\in\mc{I}}(z_i^{k+1}-z_i^{k}) =  \bar{\bs \lambda}^{k} + \frac{1}{N} \sum_{i\in\mc{I}}(\alpha N \bs r_i ^k) = \bar{\bs \lambda}^{k} + \alpha(Ax^k-b)$ by Lemma~\ref{lem:invariance}.
\end{proof}

\begin{theorem}[\gls{GNE} seeking in partial-decision information]\label{th:2}
Let Assumptions \ref{asm:Convexity}-\ref{asm:graph} hold. Then, there exists $\alpha_{\max}>0$ such that, for all $0< \alpha< \alpha_{\max}$, the iteration in \eqref{eq:algo1_compact} converges linearly to $\bs \omega^\star = (\bs{\xi}^\star,\0)$, with $\bs{\xi}^\star =(x^\star,\lambda^\star)$: for all $k \in \mathbb{N}$
\begin{align}
V(\bs \omega^{k+1}) \leq \eta  V(\bs \omega^{k}),
\end{align}
where 
\begin{align}\label{eq:Lyap}
    V(\bs{\omega}) \coloneqq \textstyle\frac{1}{2}\|\bs{\xi}-\bs{\xi}^\star\|_P^2+ \|\bs{\chi}\|^2
\end{align}
for some $\eta<1$. \hfill $\square$
\end{theorem}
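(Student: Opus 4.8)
The plan is to read \eqref{eq:algo1_compact} as the feedback interconnection of two subsystems and to close a small-gain argument through the Lyapunov function $V$ in \eqref{eq:Lyap}: the \emph{optimization layer} $\bs\xi=(x,\bar{\bs\lambda})$, governed by $\mc B_1$ plus the perturbation $\mc B_2$, and the \emph{consensus-error layer} $\bs\chi=(\tilde{\bs\sigma},\tilde{\bs r},\tilde{\bs\lambda})$, governed by $\mc B_3$ plus the perturbation $\mc B_4$. First I would verify that $\bs\omega^\star=(\bs\xi^\star,\0)$ is a fixed point: since $\mc A(\bs\xi^\star)=\0$ by \eqref{eq:KKT} we have $\mc B_1(\bs\xi^\star)=\bs\xi^\star$; moreover $\mc B_2$ and $\mc B_4$ vanish at $\bs\chi=\0$, because $\tilde{\bs\sigma}=\tilde{\bs\lambda}=\0$ forces $\bs F(x,\bs\sigma)=\bs F(x,\1\otimes\bar{\bs\sigma})$ and $x^{k+1}=x^k$. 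Writing $s^k\coloneqq\|\bs\xi^k-\bs\xi^\star\|_P$ and $t^k\coloneqq\|\bs\chi^k\|$, the goal is to obtain two contraction-with-perturbation inequalities in $s^k,t^k$ and then to combine them into $V$.

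For the optimization layer, Theorem~\ref{th:1} gives that $\mc B_1=\Id-\alpha\mc A$ is a contraction in $\mc H_P$ with factor $\sqrt\rho<1$, hence $\|\mc B_1(\bs\xi^k)-\bs\xi^\star\|_P\le\sqrt\rho\,s^k$. The perturbation $\mc B_2$ is controlled by $\bs\chi$ alone: using $\bs\sigma^k-\1\otimes\bar{\bs\sigma}^k=\tilde{\bs\sigma}^k$ together with Lipschitz continuity of $\bs F$ in its second argument (from Assumptions~\ref{asm:StrMon},~\ref{asm:aggregation}) and $\|\bs A^\top\tilde{\bs\lambda}^k\|\le\ell_A\|\tilde{\bs\lambda}^k\|$, one gets $\|\mc B_2(\bs\omega^k)\|_P\le\alpha c_1 t^k$ for some $c_1>0$. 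By the triangle inequality this yields $s^{k+1}\le\sqrt\rho\,s^k+\alpha c_1 t^k$.

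For the consensus-error layer, $\mc B_3$ contracts with factor $\theta<1$: since $\tilde{\bs\sigma},\tilde{\bs r},\tilde{\bs\lambda}\in\mathrm{ran}\,\tilde\Pi$ and $\|\bs W\tilde\Pi\|\le\theta$ by Assumption~\ref{asm:graph}, we have $\|\mc B_3(\bs\chi^k)\|\le\theta\|\bs\chi^k\|$. The term $\alpha N\tilde{\bs r}^k$ in the $\tilde{\bs\lambda}$-block of $\mc B_4$ is a coupling internal to $\bs\chi$ and merely inflates the factor to $\theta+\alpha N$. The remaining blocks of $\mc B_4$ are the increments $\phi_i(x_i^{k+1})-\phi_i(x_i^k)$ and $A_i(x_i^{k+1}-x_i^k)$, bounded by $(\ell_\sigma+\ell_A)\|x^{k+1}-x^k\|$. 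The key step is to bound this increment by the state: the $x$-block of \eqref{eq:algo1_compact} reads $x^{k+1}-x^k=-\alpha[\mc A(\bs\xi^k)]_x+[\mc B_2(\bs\omega^k)]_x$, and since $\mc A(\bs\xi^\star)=\0$ and $\mc A$ is $\ell_{\mc A}$-Lipschitz, $\|x^{k+1}-x^k\|\le\alpha c_2(s^k+t^k)$. Altogether $t^{k+1}\le(\theta+\alpha b)\,t^k+\alpha a_{21}s^k$ for some $b,a_{21}>0$.

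Finally I would combine the two estimates. They define the nonnegative comparison matrix $M=\begin{bmatrix}\sqrt\rho & \alpha c_1\\ \alpha a_{21} & \theta+\alpha b\end{bmatrix}$; squaring each inequality and applying Young's inequality $2s^k t^k\le\epsilon(s^k)^2+\epsilon^{-1}(t^k)^2$ to the cross terms, one bounds $V(\bs\omega^{k+1})=\tfrac12(s^{k+1})^2+(t^{k+1})^2$ by a convex combination of $\tfrac12(s^k)^2$ and $(t^k)^2$ with coefficients $\eta_s=\rho+O(\alpha\epsilon)+O(\alpha^2)$ and $\eta_t=\theta^2+O(\alpha/\epsilon)$, so that $V(\bs\omega^{k+1})\le\eta V(\bs\omega^k)$ with $\eta=\max\{\eta_s,\eta_t\}$. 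The main obstacle is that the contraction margin of the optimization layer, $1-\sqrt\rho=\alpha\mu_{\mc A}+O(\alpha^2)$, is only first order in $\alpha$ --- the same order as the coupling gains --- so the small-gain condition is a genuine $O(\alpha)$-versus-$O(\alpha^2)$ comparison: one must first fix $\epsilon$ small (proportional to $\mu_{\mc A}$ over the aggregate cross-term constant) so that $\eta_s<1$, and only then shrink $\alpha$ below a threshold $\alpha_{\max}$ so that $\eta_t<1$ as well; the consensus layer, by contrast, has the $O(1)$ margin $1-\theta$, which comfortably absorbs its $O(\alpha/\epsilon)$ perturbation.
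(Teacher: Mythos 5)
Your proposal is correct and follows essentially the same route as the paper: the same decomposition of \eqref{eq:algo1_compact} into the contractive pair $(\mc{B}_1,\mc{B}_3)$ perturbed by the $O(\alpha)$-Lipschitz couplings $(\mc{B}_2,\mc{B}_4)$, closed by a small-gain argument on the $2\times 2$ comparison matrix for $V$. You in fact supply details the paper only sketches (the fixed-point check, the bound on $\|x^{k+1}-x^k\|$, and the order in which $\epsilon$ and then $\alpha$ must be fixed), but the underlying argument is identical.
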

\begin{proof}
Note that the operator $\mc{B}_3$ in \eqref{eq:algo1_compact} is a contraction by  Assumption~\ref{asm:graph}, definition of $\bs{\chi}$ and \eqref{eq:projected_variables}; instead $\mc{B}_1$ is a contraction for $\alpha$ small enough as in Theorem~\ref{th:1}. Moreover, the mappings $\mc{B}_2$ and $\mc{B}_3$ are Lipschitz continuous {with constant proportional to the step size $\alpha$,} by the assumptions.  
Therefore, with Lemma~\ref{lem:KKT_strong_mon} and the reformulation in \eqref{eq:algo1_compact} in place, the proof can be
carried out via standard small-gain arguments, and is hence only sketched here. By the Cauchy--Schwarz inequality, we can bound
\begin{align*}
    & \hphantom{{}={}} \| \mc{B}_1(\bs{\xi}^k)
    +\mc{B}_2(\bs{\omega}^k) - \bs{\xi}^\star \|_P
    \\
    & \leq 
    \begin{multlined}[t]
        (1 -{2}\alpha\mu_{\mc{A}}+\alpha^2\ell_{\mc{A}}^2)\|\bs{\xi}^k-\bs{\xi}^\star \|^2_P 
    \\ +
    \alpha \ell_1 \|\bs{\xi}^k-\bs{\xi}^\star \|_P\| \bs{\chi}^k\| 
     +
    \alpha^2 \ell_2 \| \bs{\chi}^k\|^2,
        \end{multlined}
    \end{align*}
    and 
   \begin{align*}
    & \hphantom{{}={}} \| \mc{B}_3(\bs{\chi}^k)
    +\mc{B}_4(\bs{\omega}^k)  \|^2
    \\
    & \leq 
    \begin{multlined}[t]
        \theta \|\bs{\chi}^k\|^2 
    +
    \alpha \ell_3 \|\bs{\xi}^k-\bs{\xi}^\star \|_P\| \bs{\chi}^k\| + \alpha \ell_4\|\bs{\chi}^k\|^2 
    \\
    +
    \alpha^2 \ell_5 \| \bs{\xi}^k -\bs\xi_1^\star \|_P^2
    +\alpha^2 \ell_6 \| \bs{\chi}^k\|^2,
        \end{multlined}
    \end{align*}
where $\ell_1,\ell_2,\ell_3,\ell_4,\ell_5,\ell_6>0$ are parameters independent of $\alpha$. Therefore, for the norm $V$ in \eqref{eq:Lyap} we have  
\begin{align*}
   & \hphantom{{}={}} V(\bs{\omega}^{k+1}) = 
   \begin{smallbmatrix}
       \| \bs{\xi}^{k+1} -\bs\xi^\star\|_P
       \\  
       \|\bs{\chi}^{k+1}\|
   \end{smallbmatrix}^\top
   \begin{smallbmatrix}
       \| \bs{\xi}^{k+1} -\bs\xi^\star\|_P
       \\  \| \bs{\chi}^{k+1}\|
   \end{smallbmatrix}
   \\
   & \leq 
    \begin{smallbmatrix}
       \| \bs{\xi}^{k} -\bs\xi^\star\|_P
       \\  \| \bs{\chi}^{k}\|
   \end{smallbmatrix}^\top 
 M 
    \begin{smallbmatrix}
       \| \bs{\xi}^{k} -\bs\xi^\star\|_P
       \\  \| \bs{\chi}^{k}\|
   \end{smallbmatrix}
   \\& 
   \leq \uplambda_{ {\max}} (M) V(\bs\omega^k),
\end{align*}
where 
\begin{align*}
 M\coloneqq\begin{bmatrix}
        1- {2}\alpha\mu_{\mc{A}}+\alpha^2(\ell_{\mc{A}}+\ell_5)  
        &
        \alpha \frac{1}{2}(\ell_1+\ell_3) 
        \\[1em]
        \alpha \frac{1}{2}(\ell_1+\ell_3) 
        &
        \theta +\alpha\ell_4 +\alpha^2 ({\ell_2+}\ell_6)
    \end{bmatrix}
\end{align*}
  and $\uplambda_{\max}(M) = \|M\|<1$ for small enough $\alpha >0$. 
    \end{proof}

Note that, for the special case $A = \1^\top$, a linearly convergent continuous-time method for \gls{GNE}  seeking in partial-decision information was studied in \cite{Deng_Nian_Aggregative_TNNLS_2019}. Yet, to our knowledge, Theorem~\ref{th:2} is the first result to ensure linear convergence in the case of more general general full-rank constraints. Due to space limitations, we do not derive here an explicit expression for $\alpha_{\max}$ and $\eta$ (or $\mu_{\mc{A}}$ in Lemma~\ref{lem:KKT_strong_mon}). 

\begin{remark}
The proof of Theorem~\ref{th:1} directly applies to the case of a time-varying graph with weight matrix $W^k$, provided that Assumption~\ref{asm:graph} holds for each $k\in\mathbb{N}$.
With some modification, the argument can be extended also to the case of doubly stochastic graphs that are not strongly connected at each step, but such that  $\| W^{kQ} W^{kQ+1} \dots W^{(k+1)Q} - \frac{1}{N} \1\1^\top \| \leq \theta <1$, for a $Q>0$ and all $\k$. 
\hfill $\square$
\end{remark}
    
\section{Equilibrium tracking in time-varying games}
\label{sec:timevarying}

We now consider the case where the game in \eqref{eq:game} varies over time at a rate such that we can not assume a time-scale separation between the game evolution and the GNE seeking iterations. For each time index $t\in\mathbb{N}$, the agents acquire a new instance of the game:
\begin{align} \label{eq:game_tvar}
	(\forall i \in \mathcal{I})
	\  \min_{ y_i \in \R^{ {n_{i}}}}   \; J^t_i(y_i,x_{-i}) \quad
	\text{s.t.}  \  (y_i,x_{-i}) \in \mathcal X^t.
	\end{align}
 We consider the case when the constraints of the game vary only in their affine part, that is, $\mc X^t:=\{x\in\R^n|Ax=b^t\}$, and Assumptions \ref{asm:Convexity}--\ref{asm:constraintrank} hold for each $t$. The games in \eqref{eq:game_tvar} define a primal-dual GNE pair \emph{sequence} $(\omega^{\star}_t)_{t\in\mathbb{N}}$, corresponding to the zero set of the KKT operators $(\mc A_t)_{t\in\N}$ defined for all $t$ as in \eqref{eq:KKT}, with $F, b$ replaced respectively by $F^t:=\col((\nabla_{x_i}J_i^t)_{i\in\mc I})$ and $b^t$. The GNE sequence is unique for each $t$ following the strong monotonicity of $\mc A_t$ and \cite[Ex. 22.12]{Bauschke_2017}.
 As the rate at which the problem varies is comparable to the agents' computation time, the agents can only compute an approximation of the GNE at time $t$ before they are presented with a new instance of the problem. The goal of the agents is then to find a sequence $(\omega^t)_{t\in\mathbb{N}}$ which asymptotically tracks relatively well the GNE sequence.
    We formulate the following assumption, which is standard in the literature of online optimization \cite[Assm. 1]{Simonetto_time_IEEEProc_2020}, \cite[Eq. 9]{DallAnese_Optimization_SPM_2020} and is verified, for example, for games affected by a bounded process noise in the linear constraints \cite[Lemma 5]{su_Online_AIS_2021}.
 \begin{assumption} \label{asm:bounded_t_var} For some $\delta \geq 0$, it holds that the solution $\omega_t^\star$ of the game in \eqref{eq:game_tvar} satisfies
     $$\sup_{t\in\mathbb N} \| \omega^{\star}_{t+1} - \omega^{\star}_t \| \leq \delta. $$
 \end{assumption}
Assumption \ref{asm:bounded_t_var} implies that the solution at time $t$ is an approximate solution for the problem at time $t+1$. Given an estimate of the solution at time $\omega^{t-1}$ for some time step $t$, we then propose to compute $\omega^{t}$ by performing $K$ iterations of the iteration in \eqref{eq:algorithm_full},
warm-started at $\omega^{t}$, that is: 
\begin{subequations} \label{eq:t_var_algorithm}
    \begin{align}
        y^{t,0} &= \omega^{t-1}
        \\
        y^{t,k+1} &=  
         y^{t,k} - \alpha \mc A_t(y^{t,k}) & \text{for}~k=0,...,K-1
        \\
        \omega^{t} &= y^{K},
    \end{align}
\end{subequations}
where $(y^{t,k})_{k\in\{1,...,K\}}$ are auxiliary variables. The following lemma shows that, for an appropriately chosen step size, the proposed algorithm tracks the GNE trajectory up to an asymptotic error which depends on $\delta$ and $K$.
\begin{theorem}
For any $0<\alpha<\frac{2\mu_{\mc A}}{\ell_{\mc A}^2}$, $\omega^0$, $K \in \mathbb{N}_{>0}$, the sequence $(\omega^t)_{t\in\N}$ generated by the iteration in \eqref{eq:t_var_algorithm} satisfies
\begin{equation}
    \limsup_{t\rightarrow\infty} \| \omega^t - \omega^{\star}_t \|_P \leq  \frac{\rho^{K/2}}{1-\rho^{K/2}}\delta \sqrt{\uplambda_{\text{max}}(P)}.
\end{equation}
where $\rho$ is as in Theorem \ref{th:1}. \hfill $\square$ \label{th:3}
\end{theorem}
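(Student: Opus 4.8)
The plan is to combine the per-instance contraction from Theorem~\ref{th:1} with the triangle inequality and the bounded-drift Assumption~\ref{asm:bounded_t_var}, and then close a scalar geometric recursion on the tracking error $e_t := \|\omega^t - \omega_t^\star\|_P$.

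First I would observe that, at each time $t$, the inner loop in \eqref{eq:t_var_algorithm} performs $K$ iterations of the fixed map $y\mapsto y-\alpha\mc{A}_t(y)$, whose unique fixed point is exactly the current solution $\omega_t^\star$. Since Assumptions~\ref{asm:Convexity}--\ref{asm:constraintrank} hold for every $t$, Lemma~\ref{lem:KKT_strong_mon} applies to $\mc{A}_t$ with the \emph{same} $P$ and $\mu_{\mc{A}}$, so for $0<\alpha<2\mu_{\mc{A}}/\ell_{\mc{A}}^2$ Theorem~\ref{th:1} yields the one-step bound $\|y^{t,k+1}-\omega_t^\star\|_P^2 \leq \rho\,\|y^{t,k}-\omega_t^\star\|_P^2$. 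Iterating over the $K$ inner steps and substituting $y^{t,0}=\omega^{t-1}$, $\omega^t=y^{t,K}$ gives
\[
\|\omega^t - \omega_t^\star\|_P \leq \rho^{K/2}\,\|\omega^{t-1} - \omega_t^\star\|_P.
\]

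Next I would decouple the warm-start, which is an accurate iterate for the \emph{previous} instance, from the current target. By the triangle inequality,
\[
\|\omega^{t-1} - \omega_t^\star\|_P \leq \|\omega^{t-1} - \omega_{t-1}^\star\|_P + \|\omega_{t-1}^\star - \omega_t^\star\|_P,
\]
and the drift term is controlled through norm equivalence and Assumption~\ref{asm:bounded_t_var}, namely $\|\omega_{t-1}^\star - \omega_t^\star\|_P \leq \sqrt{\uplambda_{\max}(P)}\,\|\omega_{t-1}^\star - \omega_t^\star\| \leq \sqrt{\uplambda_{\max}(P)}\,\delta$. Chaining this into the contraction produces the linear recursion
\[
e_t \leq \rho^{K/2}\big(e_{t-1} + \sqrt{\uplambda_{\max}(P)}\,\delta\big).
\]

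Finally, since $\rho^{K/2}<1$, this recursion is a contraction with constant forcing term, so $(e_t)$ is bounded and I would take the $\limsup$ of both sides: writing $E:=\limsup_t e_t<\infty$, we get $E \leq \rho^{K/2}E + \rho^{K/2}\sqrt{\uplambda_{\max}(P)}\,\delta$, which rearranges to the claimed $E \leq \frac{\rho^{K/2}}{1-\rho^{K/2}}\delta\sqrt{\uplambda_{\max}(P)}$. The argument is otherwise routine; the only point demanding care — the main obstacle — is that the inner-loop contraction is measured relative to $\omega_t^\star$ while the warm-start is close to $\omega_{t-1}^\star$, so the two error sources must be separated cleanly. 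This is precisely what the triangle-inequality split, together with the bounded-drift bound, accomplishes.
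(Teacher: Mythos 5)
Your proposal is correct and follows essentially the same route as the paper: the per-instance contraction from Theorem~\ref{th:1} applied $K$ times, the triangle-inequality split of $\|\omega^{t-1}-\omega_t^\star\|_P$ into tracking error plus drift, the norm-equivalence bound $\sqrt{\uplambda_{\max}(P)}\,\delta$ on the drift, and a geometric recursion. The only cosmetic difference is that you close the argument by taking $\limsup$ on both sides of the recursion, whereas the paper unrolls it and sums the geometric series explicitly; both yield the identical bound.
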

\begin{proof}
Following Theorem \ref{th:1}, for $\alpha\leq 2\frac{\mu_{\mc A}}{\ell_{\mc A}^2}$,
\begin{equation*}
    \|\omega^{t} - \omega_{t}^{\star} \|_P \leq \rho^{K/2}\|\omega^{t-1} - \omega_{t}^{\star}\|_P.
\end{equation*}
From the latter, the triangle inequality and the fact $\|z\|^2_P \leq \uplambda_{\text{max}}(P)\|z\|^2$ for all $z$:
\begin{align*}
    \|\omega^{t} - \omega_{t}^{\star} \|_P &\leq \rho^{K/2}\|\omega^{t-1} - \omega_{t-1}^{\star}\|_P + \rho^{K/2}\| \omega_t^{\star} - \omega_{t-1}^{\star} \|_P\\
    & \leq \rho^{K/2}\|\omega^{t-1} - \omega_{t-1}^{\star}\|_P + \rho^{K/2} \delta\sqrt{\uplambda_{\text{max}}(P)} .
\end{align*}
Iterating the latter $t$ times, we obtain
\begin{align*}
    \|\omega^{t} - \omega_{t}^{\star} \|_P &\leq \rho^{Kt/2}\|\omega^0 - \omega_0^{\star}\|_P + \sum_{\tau=1}^t \rho^{K\tau/2}\delta\sqrt{\uplambda_{\text{max}}(P)}.
\end{align*}
Since $\rho^{K/2}<1$, the thesis follows by the convergence of the geometric sequence.
\end{proof}
Let us now turn our attention to the time-varying counterpart of the partial-decision information setup described in Section \ref{sec:partial_info}. Again, we consider aggregative games in the form 
\begin{equation*}
    J^t_i(x_i,x_{-i}) = J^t_i(x_i,\sigma^t(x))
\end{equation*}
where $\sigma^t(x):=\frac{1}{N}\sum_{i\in\mc I} \phi_i^t(x_i)$ and we postulate that $\phi_i^t$ satisfies Assumption \ref{asm:aggregation} for all $t$. As in Section \ref{sec:partial_info}, we augment the state of each agent with an estimate of $\sigma^t(x)$, of the dual variable and of the residual $Ax-b^t$. For every $t$, denote $\bs\omega^{\star}_t=(\omega_t^\star, \bs 0)$, where $\omega_t^\star=(x^{\star}_t, \lambda^{\star}_t)$ is a primal-dual solution of the game at time $t$ and the vector of zeros represents the target estimation error. We then define the reference trajectory as $(\bs\omega_t^{\star})_{t\in\N}$. At each time-step, we propose to appropriately re-initialize the dynamic tracking of the estimated variables and, in the spirit of the iteration in \eqref{eq:t_var_algorithm}, to apply a finite number of iterations of Algorithm \ref{algo:1}. The resulting method is illustrated in Algorithm \ref{algo:2}. We obtain the following counterpart of Lemma \ref{lem:invariance} for the re-initialized dynamic tracking.

\begin{algorithm}[t] \caption{Time-varying fully-distributed GNE seeking} \label{algo:2}
\emph{Initialization}: choose $\alpha>0$ as in Theorem~\ref{th:2}; for all $i\in \mc{I}$, set $x_i^0 \in \R^{n_i}$, $\bs\sigma_i^0 = \bs 0$, $ z_i^0 \in \R^m$, $\bs{\lambda}_i ^0 = z_i^0$, $\bs r_i^0 = A_ix_i^0$, $b_i^{0}=\bs 0$, $\phi_i^{0}(\cdot)=\bs 0$.
\newline
\emph{Iteration: at time $t\in\N_{>0}$, for each agent $i\in\mc I$},
\begin{enumerate}
\item Acquire $J_i^t$, $\phi_i^t$, $b_i^t$
\item Re-initialize $\hat{x}_i^{t,0} = x_i^{t-1}$, $\hat{z}_i^{t,0} = z_i^{t-1}$, $\hat{\bs\sigma}_i^{t,0} = {\bs\sigma}_i^{t-1}-\phi^{t-1}_i(x_i^{t-1})+\phi^t_i(x_i^{t-1})$, $\hat{\bs r}_i^{t,0} = \bs{r}_i^{t-1} + b_i^t - b_i^{t-1}$, $\hat{\bs\lambda}_i^{t,0} = {\bs\lambda}_i^{t-1}$ 
    \item For all $k \in \{0,...,K-1\}$, for all $i\in \mc{I}:$
\begin{itemize}
    \item  Local variables update: 
    \begin{align*}
        \hat{x}_i^{t,k+1} & = \hat{x}_i^{t,k} - \alpha \left ( \bs{F}^t_i(\hat{x}_i^{t,k}, \hat{\bs{\sigma}}_i^{t,k}) + A_i^\top \hat{\bs{\lambda}}_i^{t,k} \right) 
        \\
        \hat{z}_i^{t,k+1} & = \hat{z}_i^{t,k} + \alpha N \hat{\bs r}_i^{t,k}
    \end{align*}
    \item Estimation update: Agent $i$ exchanges the variables $(\hat{\bs\sigma}_i^{t,k},\hat{\bs\lambda}^{t,k}_i,\hat{\bs r}^{t,k}_i)$ with its neighbors, and updates
    \begin{align*}
        \hat{\bs{\sigma}}_i^{t,k+1} &= \sum_{j \in \mc{N}_i} w_{i,j} \hat{\bs{\sigma}}_j^{t,k} + \phi^t_i(\hat{x}_i^{t,k+1}) -\phi^t_i(\hat{x}_i^{t,k})
        \\
        \hat{\bs{r}}_i^{t,k+1} &= \sum_{j \in \mc{N}_i} w_{i,j} \hat{\bs{r}}_j^{t,k} + A_i \hat{x}_i^{t,k+1} -A_i \hat{x}_i^{t,k} 
        \\
        \hat{\bs{\lambda}}_i^{t,k+1} &= \sum_{j \in \mc{N}_i} w_{i,j} \hat{\bs{\lambda}}_j^{t,k} + \hat{z}_i^{t,k+1} - \hat{z}_i^{t,k}
    \end{align*}
\end{itemize}
\item Set $x_i^{t}= \hat{x}_i^{t,K}$, $z_i^{t}=\hat{z}_i^{t,K}$, $\bs\sigma_i^{t}=\hat{\bs\sigma}_i^{t,K}$, $\bs r_i^{t} = \hat{\bs r}_i^{t,K}$, $\bs \lambda_i^{t} = \hat{\bs\lambda}_i^{t,K}$
\end{enumerate}

\end{algorithm}

\begin{lemma}\label{lem:invariance_2}
    For all $t\in\N$, $\bar{\bs\lambda}^{t}=\sum_{i\in\mc I} ({z}_i^{t})$, it holds that  $\bar{\bs r}^{t}=\sum_{i\in\mc I} (A_i{x}_i^{t}-b_i^t)$, $\bar{\bs\sigma}^{t}=\sigma^t({x}^{t})$. \hfill$\square$
\end{lemma}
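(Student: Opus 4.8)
The plan is to prove Lemma~\ref{lem:invariance_2} by a nested induction: an outer one over the time index $t$ and, for each fixed $t$, an inner one over the iteration counter $k\in\{0,\dots,K\}$ of step~3) of Algorithm~\ref{algo:2}. For fixed $t$ I would track the running averages $\frac{1}{N}\sum_{i\in\mc I}\hat{\bs\sigma}_i^{t,k}$, $\frac{1}{N}\sum_{i\in\mc I}\hat{\bs r}_i^{t,k}$, $\frac{1}{N}\sum_{i\in\mc I}\hat{\bs\lambda}_i^{t,k}$ and show, exactly as in Lemma~\ref{lem:invariance}, that they equal $\sigma^t(\hat x^{t,k})$, $\frac{1}{N}\sum_{i\in\mc I}(A_i\hat x_i^{t,k}-b_i^t)$ and $\frac{1}{N}\sum_{i\in\mc I}\hat z_i^{t,k}$ for every $k$. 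Reading these off at $k=K$ and using the assignments $x_i^t=\hat x_i^{t,K}$, $z_i^t=\hat z_i^{t,K}$, etc.\ of step~4) then yields the three claimed identities at time $t$.

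For the inner induction step the computation is the one already carried out in Lemma~\ref{lem:invariance}. Summing each estimation update over $i\in\mc I$ and dividing by $N$, the column-stochasticity $\1^\top W=\1^\top$ of Assumption~\ref{asm:graph} leaves the consensus part of the average unchanged, $\frac{1}{N}\sum_{i\in\mc I}\sum_{j\in\neigi}w_{i,j}(\cdot)_j=\frac{1}{N}\sum_{i\in\mc I}(\cdot)_i$, while the local increment terms telescope against the inner induction hypothesis: the aggregate average advances by $\sigma^t(\hat x^{t,k+1})-\sigma^t(\hat x^{t,k})$, the residual average by $\frac{1}{N}\sum_{i\in\mc I}A_i(\hat x_i^{t,k+1}-\hat x_i^{t,k})$, and the dual average by $\frac{1}{N}\sum_{i\in\mc I}(\hat z_i^{t,k+1}-\hat z_i^{t,k})$. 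Since $b_i^t$ and $\phi_i^t$ are frozen during the inner loop, each average lands exactly on its target at $k+1$, so the invariance propagates through all $K$ steps.

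The genuinely new ingredient, and the step I expect to be the main obstacle, is the re-initialization in step~2), which must reinstall the invariance at $k=0$ from the end-of-step values at $t-1$ despite the change of game data $(\phi_i^{t-1},b_i^{t-1})\mapsto(\phi_i^t,b_i^t)$. Here I would invoke the outer induction hypothesis, namely Lemma~\ref{lem:invariance_2} at $t-1$, and check the three cases at $k=0$ separately. The dual case is immediate, since $\hat{\bs\lambda}_i^{t,0}=\bs\lambda_i^{t-1}$ and $\hat z_i^{t,0}=z_i^{t-1}$ give $\frac{1}{N}\sum_{i\in\mc I}\hat{\bs\lambda}_i^{t,0}=\bar{\bs\lambda}^{t-1}=\frac{1}{N}\sum_{i\in\mc I}z_i^{t-1}$. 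For the aggregate, averaging $\hat{\bs\sigma}_i^{t,0}=\bs\sigma_i^{t-1}-\phi_i^{t-1}(x_i^{t-1})+\phi_i^t(x_i^{t-1})$ and using $\bar{\bs\sigma}^{t-1}=\sigma^{t-1}(x^{t-1})$ cancels the stale aggregate and installs $\sigma^t(x^{t-1})=\sigma^t(\hat x^{t,0})$; the residual correction $\hat{\bs r}_i^{t,0}=\bs r_i^{t-1}+b_i^t-b_i^{t-1}$ is meant to play the analogous role, converting $\frac{1}{N}\sum_{i\in\mc I}(A_ix_i^{t-1}-b_i^{t-1})$ into $\frac{1}{N}\sum_{i\in\mc I}(A_ix_i^{t-1}-b_i^t)$, and this is precisely the place where the sign bookkeeping of the $b$-increment must be done carefully. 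Finally, the base case $t=0$ is read off directly from the initialization ($\bs\sigma_i^0=\0$, $\phi_i^0\equiv\0$, $b_i^0=\0$, $\bs\lambda_i^0=z_i^0$, $\bs r_i^0=A_ix_i^0$), which closes the outer induction.
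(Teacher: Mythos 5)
Your approach coincides with the paper's: an outer induction over $t$ in which the inner loop's invariance over $k$ is supplied by Lemma~\ref{lem:invariance} (the paper simply cites that lemma instead of redoing the induction on $k$), and the only new work is checking that the re-initialization in step 2) restores the invariance for the new data $(\phi_i^t,b_i^t)$, with the base case read off from the initialization. On the one point you explicitly left open, your suspicion is warranted: to convert $\tfrac1N\sum_{i}(A_ix_i^{t-1}-b_i^{t-1})$ into $\tfrac1N\sum_{i}(A_ix_i^{t-1}-b_i^{t})$ one must add $\tfrac1N\sum_{i}(b_i^{t-1}-b_i^{t})$, which is exactly what the paper's proof uses ($\tfrac1N\sum_i\hat{\bs r}_i^{t+1,0}=\bar{\bs r}^{t}-\tfrac1N\sum_i(b_i^{t+1}-b_i^{t})$), whereas the re-initialization as printed in Algorithm~\ref{algo:2}, $\hat{\bs r}_i^{t,0}=\bs r_i^{t-1}+b_i^t-b_i^{t-1}$, carries the increment with the opposite sign; with that sign corrected your argument closes and matches the paper's.
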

\begin{proof}
    Let for some $t$:
    \begin{align}\label{eq:lem:invariance_2:1}
        \bar{\bs r}^{t} =\tfrac{1}{N} \textstyle\sum_{i\in\mc I} A_i{x}_i^{t}-b_i^t.
    \end{align}
    From the update step 4) and the re-initialization step 2), we obtain ${x}_i^{t}=\hat{x}_i^{t,K}=\hat{x}_i^{t+1,0}$ and
    \begin{align*}
        \tfrac{1}{N}\textstyle\sum_{i\in\mc I}\hat{\bs r}_i^{t+1,0} &= \bar{\bs r}^{t} - \tfrac{1}{N}\textstyle\sum_{i\in\mc I} (b_i^{t+1}-b_i^{t})\\
        &\overset{\eqref{eq:lem:invariance_2:1}}{=}\tfrac{1}{N} \textstyle\sum_{i\in\mc I} (A_i\hat{x}_i^{t+1,0}-b_i^{t+1}).
    \end{align*}
    Then, by Lemma \ref{lem:invariance}, $\frac{1}{N}\sum_{i\in\mc I}\hat{\bs r}_i^{t+1,K} =\tfrac{1}{N} \textstyle\sum_{i\in\mc I} A_i\hat{x}_i^{t+1,K}-b_i^{t+1}$. From the latter and the update step 4), 
    $$\bar{\bs r}^{t+1}=\tfrac{1}{N}\textstyle\sum_{i\in\mc I}\hat{\bs r}^{t+1,K}=\tfrac{1}{N} \textstyle\sum_{i\in\mc I} A_i{x}_i^{t+1}-b_i^{t+1} . $$ 
    The result then follows by induction and similarly for $\bar{\bs\sigma}^{t}$ and $\bar{\bs\lambda}^{t}$.
\end{proof}
The re-inizialization of the dynamic tracking introduces an additional error term in the solution tracking, which requires the following technical assumption on the time variation of the functions $(\phi_i^t)_{t\in\N}$:
\begin{assumption} \label{as:delta_phi} For some $\delta_{\phi}>0$, it holds that
    \begin{equation*}
        \sup_{x,t}\|\col(\phi_i^t(x_i)-\phi_i^{t+1}(x_i))_{i\in\mc I}\| \leq \delta_{\phi}.
    \end{equation*}\hfill$\square$
\end{assumption}

\begin{theorem} \label{th:4} Let ${\bs\omega}^t=\col(x^t, \bar{\bs \lambda}^{t}, \tilde{\bs\sigma}^{t}, \tilde{\bs r}^{t} , \tilde{\bs\lambda}^{t} ).$
    There exists $\alpha_{\text{max}}$ such that, for every $0<\alpha<\alpha_{\text{max}}$, the sequence $(\bs\omega^t)_{t\in\N}$ generated by Algorithm \ref{algo:2} satisfies 
\begin{align}
\begin{split}
    \limsup_{t\rightarrow\infty} &\| \bs\omega^t - \bs\omega^{\star}_t \|_Q  \leq \\
    &\tfrac{\eta^{K/2}}{1- \eta^{K/2}}\sqrt{\uplambda_{\text{max}}(Q)}((1+N\ell_A)\delta + \delta_{\phi})
\end{split}
\end{align}
where $Q=\mathrm{diag}(P/2,I)$, for some $\eta\in(0,1)$.\hfill$\square$
\end{theorem}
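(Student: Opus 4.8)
The plan is to mirror the argument of Theorem~\ref{th:3}, replacing the single-step contraction of the centralized iteration by the contraction of the fully-distributed scheme established in Theorem~\ref{th:2}. The starting observation is that, with $Q=\mathrm{diag}(P/2,I)$, the Lyapunov function of Theorem~\ref{th:2} is exactly $V(\bs\omega)=\|\bs\omega-\bs\omega^\star_t\|_Q^2$, so that $V(\bs\omega^{k+1})\le\eta V(\bs\omega^k)$ reads $\|\bs\omega^{k+1}-\bs\omega^\star_t\|_Q\le\eta^{1/2}\|\bs\omega^{k}-\bs\omega^\star_t\|_Q$. First I would verify that the inner loop (step~3) of Algorithm~\ref{algo:2} is precisely $K$ iterations of Algorithm~\ref{algo:1} run on the time-$t$ data $(\bs F^t,\phi_i^t,b^t)$ and warm-started at the re-initialized state $\hat{\bs\omega}^{t,0}$. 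Since Lemma~\ref{lem:invariance_2} guarantees that the re-initialization preserves the average-tracking invariance of Lemma~\ref{lem:invariance}, the compact reformulation of Lemma~\ref{lem:rewritten_alg} applies verbatim to each time-$t$ batch; as Assumptions~\ref{asm:Convexity}--\ref{asm:graph} hold uniformly in $t$, the contraction constant $\eta$ of Theorem~\ref{th:2} is the same for every instance, giving the per-batch estimate
\begin{equation*}
\|\bs\omega^t-\bs\omega^\star_t\|_Q \le \eta^{K/2}\,\|\hat{\bs\omega}^{t,0}-\bs\omega^\star_t\|_Q .
\end{equation*}

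Next I would bound the re-initialized error by the triangle inequality, separating the previous tracking error, the drift of the reference, and the re-initialization perturbation:
\begin{equation*}
\|\hat{\bs\omega}^{t,0}-\bs\omega^\star_t\|_Q \le \|\bs\omega^{t-1}-\bs\omega^\star_{t-1}\|_Q + \|\bs\omega^\star_{t-1}-\bs\omega^\star_t\|_Q + \|\hat{\bs\omega}^{t,0}-\bs\omega^{t-1}\|_Q .
\end{equation*}
Since $\bs\omega^\star_t=(\omega^\star_t,\0)$ has zero $\bs\chi$-block, the drift term is $\|\bs\omega^\star_{t-1}-\bs\omega^\star_t\|_Q\le\sqrt{\uplambda_{\max}(Q)}\,\|\omega^\star_t-\omega^\star_{t-1}\|\le\sqrt{\uplambda_{\max}(Q)}\,\delta$ by Assumption~\ref{asm:bounded_t_var}. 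For the perturbation term, reading step~2 of Algorithm~\ref{algo:2} in the coordinates $(x,\bar{\bs\lambda},\tilde{\bs\sigma},\tilde{\bs r},\tilde{\bs\lambda})$ shows that re-initialization leaves $x$, $\bar{\bs\lambda}$ and $\tilde{\bs\lambda}$ unchanged and only shifts $\tilde{\bs\sigma}$ by $\tilde\Pi\col((\phi^t_i(x_i^{t-1})-\phi^{t-1}_i(x_i^{t-1}))_{i\in\mc{I}})$ and $\tilde{\bs r}$ by $\tilde\Pi\col((b_i^t-b_i^{t-1})_{i\in\mc{I}})$ (the averages $\bar{\bs\sigma},\bar{\bs r}$ are eliminated by the reformulation and play no role). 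Using $\|\tilde\Pi\|\le1$, Assumption~\ref{as:delta_phi} bounds the first shift by $\delta_\phi$, while the stacked constraint-data change is controlled agent by agent through the relation $b^t=Ax^\star_t$ and Assumption~\ref{asm:bounded_t_var}; summing the $N$ per-agent contributions yields $N\ell_A\delta$. Hence $\|\hat{\bs\omega}^{t,0}-\bs\omega^{t-1}\|_Q\le\sqrt{\uplambda_{\max}(Q)}\,(N\ell_A\delta+\delta_\phi)$.

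Collecting these bounds and writing $e_t\coloneqq\|\bs\omega^t-\bs\omega^\star_t\|_Q$ gives the scalar recursion
\begin{equation*}
e_t \le \eta^{K/2}\Big(e_{t-1}+\sqrt{\uplambda_{\max}(Q)}\big((1+N\ell_A)\delta+\delta_\phi\big)\Big),
\end{equation*}
which, iterated from $t=0$ and summed as a geometric series (recall $\eta^{K/2}<1$), yields the claimed $\limsup$. The main obstacle is the careful bookkeeping of the re-initialization in the reformulated coordinates: one must check that step~2 moves only the $\tilde{\bs\sigma}$ and $\tilde{\bs r}$ blocks and quantify the resulting shift, in particular establishing the $N\ell_A$ factor for the residual block from the per-agent constraint-data variation. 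A secondary point is to confirm that Lemma~\ref{lem:invariance_2} makes the hypotheses of Theorem~\ref{th:2} hold at every $t$ with a step size $\alpha$ and rate $\eta$ independent of $t$, so that the per-batch contraction can be chained across the time-varying instances.
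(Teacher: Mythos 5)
Your proposal is correct and follows essentially the same route as the paper's proof: both reduce the inner loop to the reformulated iteration of Lemma~\ref{lem:rewritten_alg} with time-$t$ data, isolate the re-initialization perturbation (your $\hat{\bs\omega}^{t,0}-\bs\omega^{t-1}$ is exactly the paper's $\psi^t$), bound it by $\delta_\phi + N\ell_A\delta$ via Assumptions~\ref{asm:bounded_t_var} and \ref{as:delta_phi} together with $Ax_t^\star=b^t$, and then chain the $\eta^{K/2}$-contraction with the triangle inequality into a geometric series. Your explicit tracking of $\tilde\Pi$ and of which blocks the re-initialization touches is a slightly more careful bookkeeping of the same estimate, not a different argument.
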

\begin{proof} Following the same steps as in Lemma \ref{lem:rewritten_alg}, the inner iteration (Step 3) of Algorithm \ref{algo:2} is equivalent to the iteration in \eqref{eq:algo1_compact}, where $\bs F$ and $\phi_i$ are substituted by their time-varying counterpart. Now denote 
\begin{align*}\psi^t =&\col(\bs 0, \bs 0, \col((\phi_i^t(x_i^t)-\phi_i^{t-1}(x_i^t))_{i\in\mc I}, \\
&\col(b_i^t-b_i^{t-1})_{i\in\mc I}, \bs 0).
\end{align*}
From Assumptions \ref{asm:bounded_t_var}, \ref{as:delta_phi} and from $A x_t^{\star} = b^t$,
\begin{align}\label{eq:bound_psi}
\begin{split}
    \|\psi^t\| &\leq \delta_{\phi} + \textstyle\sum_{i\in\mc I} \| b_i^t - b_i^{t-1} \| \\
    &\leq \delta_{\phi} + N\|b^t-b^{t-1} \|\leq  \delta_{\phi} + N \ell_A \delta 
\end{split}
\end{align}
From Theorem \ref{th:2} and accounting for the re-initialization step, we find for every $t$:
    \begin{align} \label{eq:t_var_contraction_partinfo}
        & \|  \bs\omega^t - \bs\omega^{\star}_t\|_Q \leq \eta^{K/2} \| {\bs\omega}^{t-1} + 
        \psi^t - \bs\omega^{\star}_t\|_Q
    \end{align}
    for some $\eta\in(0,1)$. By the triangle inequality, Assumption \ref{asm:bounded_t_var} and \eqref{eq:bound_psi}, and from the fact $\|z\|^2_Q \leq \uplambda_{\text{max}}(Q)\|z\|^2$, we have
    \begin{align*}
        &\| {\bs\omega}_{t-1} + 
        \psi_t - \bs\omega^{\star}_t\|_Q \leq \| {\bs\omega}_{t-1} - \bs\omega^{\star}_t \|_Q + \| \psi_t \|_Q \\ 
        &\leq \| {\bs\omega}_{t-1} - \bs\omega^{\star}_{t-1} \|_Q + \| \bs\omega^{\star}_{t} -\bs\omega^{\star}_{t-1}\|_Q + \| \psi_t \|_Q\\
        &\leq \| {\bs\omega}_{t-1} - \bs\omega^{\star}_{t-1} \|_Q + \sqrt{\uplambda_{\text{max}}(Q)}((1+N\ell_A)\delta + \delta_{\phi}).
    \end{align*}
    By substituting the latter in \eqref{eq:t_var_contraction_partinfo} and by iterating the resulting inequality, we obtain: 
    \begin{align*}
        \|  \bs\omega_t - \bs\omega^{\star}_t\|_Q & \leq  \eta^{Kt/2} \| {\bs\omega}_{0} - \bs\omega^{\star}_0\|_Q \\
        & + \textstyle\sum_{\tau=1}^t \eta^{Kt/2}\sqrt{\uplambda_{\text{max}}(Q)}((1+N\ell_A)\delta + \delta_{\phi}).
    \end{align*}
    Then, as $\eta^{K/2}<1$, the result follows from the convergence of the geometric series.
\end{proof}

\begin{remark}
   Considering a time-varying matrix constraint $A_t$ (instead of $A$) would generate some complications, as also the matrix wou \eqref{eq:def_P} would be time-varying. This case can be dealt by assuming a lower bound for $K$ in Theorems~\ref{th:3} and \ref{th:4}, or under the extra assumption that $\bar A A_t^\top \geq \mu_A$ for a matrix $\bar A$ and all $\k$; but it is not discussed here. \hfill $\square$
\end{remark}

\section{Numerical example$^1$}
\label{sec:num}
\let\thefootnote\relax\footnote{$^1$\texttt{https:\slash{\slash}github.com{\slash}bemilio{\slash}Simple\_peer\_to\_peer} }

We demonstrate the proposed algorithms on a market clearing problem for a peer-to-peer energy market model inspired by the one in \cite{Belgioioso_Energy_ECC_2020}. We consider $N=6$ prosumers that aim at determining their energy portfolio. At each time-step $t$, the agents can either purchase power from a main energy operator, produce it from a dispatchable energy source or trade it with their respective neighbors over a randomly generated undirected graph $\mc G^{\textrm{E}}$. Furthermore, the agents can exchange information over an undirected connected communication graph $\mc{G}$. We denote for each agent $i$ and each time-step $t$ the power purchased from the main operator as $x^{\text{mg}}_{i,t}$, the produced power as $x^{\text{dg}}_{i,t}$ and the power that agent $i$ purchases from agent $j$ as $x^{\text{tr}}_{i,j,t}$, $j\in\mc N_i^{\textrm E}$, with $\mc N_i^{\textrm E}$ the set of neighbors of agent $i$ over $\mc{G}^{\textrm{E}}$. As in \cite{Belgioioso_Energy_ECC_2020}, the energy price posed by the main operator increases linearly with the aggregate power requested at the main energy operator by a factor $c_{\text{mg}}>0$. 
Thus, by defining the aggregative value
$$\sigma^{\text{mg}}(x^\text{mg}) = \textstyle\sum_{i\in\mc I} x^{\text{mg}}_{i},$$
the cost incurred by each agent for purchasing energy from the operator is $J^{\text{mg}}(x^{\text{mg}}_{i,t}, \sigma^{\text{mg}}(x^\text{mg}_t))= c^{\text{mg}}\sigma^{\text{mg}} (x^\text{mg}_t) x^{\text{mg}}_{i,t}$.
We consider quadratic cost on the power generation incurred by the agents \cite[Eq. 2]{Belgioioso_Energy_ECC_2020}, with the form $J^{\text{dg}}_{i,t}(x^{\text{dg}}_{i,t})= c^{\text{dg}}(x^{\text{dg}}_{i,t} - x^{\text{dg, ref}}_{i,t})^2$ where $x^{\text{dg, ref}}_{i,t}$ is the  \emph{time-varying} scheduled setpoint of the dispatchable generators and $c^{\text{dg}}>0$. The price (or revenue) of trading energy between peers is linear \cite[Eq. 10]{Belgioioso_Energy_ECC_2020}, and we assume that the agents incur a quadratic cost on the transactions for utilizing the market, thus the total objective function related to the peer-to-peer trading is given by $J^{\text{tr}}((x^{\text{tr}}_{i,j,t})_{j\in\mc N_i^{\textrm E}} ) = \sum_{j\in\mc N_i} (c^{\text{tr}} x^{\text{tr}}_{i,j,t} + \kappa^{\text{tr}} (x^{\text{tr}}_{i,j,t})^2)$. We impose that the agents cannot sell power to the main operator and that, due to physical limitations, the power generated by the dispatchable units must be non-negative. As the formulation in \eqref{eq:game} does not consider inequality constraints, this is enforced by a Lipschitz continuous approximation of the logarithmic barrier function 
$$J^{\text{bar}}(x^{\text{mg}}_{i,t}, x^{\text{dg}}_{i,t}) = \Gamma_\gamma(x^{\text{mg}}_{i,t}) + \Gamma_\gamma(x^{\text{dg}}_{i,t}),$$
where $\Gamma_\gamma(y):= \min(-\log(y), -\gamma y + 1-\log(1/\gamma) )$ for $\gamma\gg0$. The total cost incurred by each agent is thus given by 
\begin{align*}
    J_i^t(x_{i,t},x_{-i,t}) &=  J^{\text{mg}}(x^{\text{mg}}_{i,t}, \sigma^{\text{mg}}(x_t^{\text{mg}}))+J^{\text{dg}}_{i,t}(x^{\text{dg}}_{i,t}) \\ &  +  J^{\text{tr}}((x^{\text{tr}}_{i,j,t})_{j\in\mc N_i^ \textrm{E}}) + J^{\text{bar}}(x^{\text{mg}}_{i,t}, x^{\text{dg}}_{i,t}) .
\end{align*}
Moreover, given a power demand $p_{i,t}^{\text{d}}$, the agents need to satisfy the power balance equation \cite[Eq. 1]{Belgioioso_Energy_ECC_2020}
\begin{equation}\label{eq:constraint_powerbalance}
    \sum_{j\in\mc N_i^{\textrm E}}(x^{\text{tr}}_{i,j,t}) + x^{\text{mg}}_{i,t} + x^{\text{dg}}_{i,t} = p_{i,t}^{\text{d}}.
\end{equation}
\begin{figure}
    \centering
    \includegraphics[width=\columnwidth]{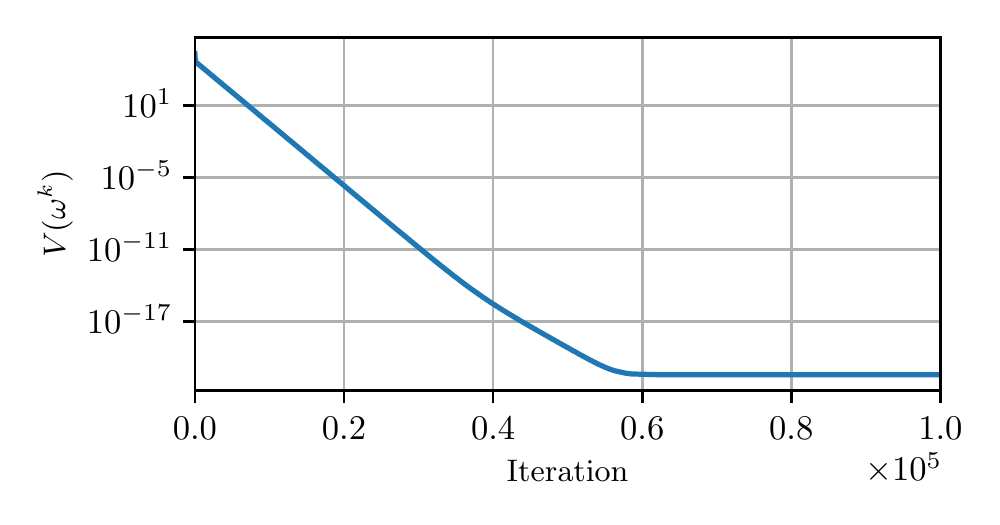}
    \caption{Convergence of Algorithm \ref{algo:1} for the day-ahead market clearing problem.}
    \label{fig:1}
\end{figure}
\begin{figure}
    \centering
    \includegraphics[width=\columnwidth]{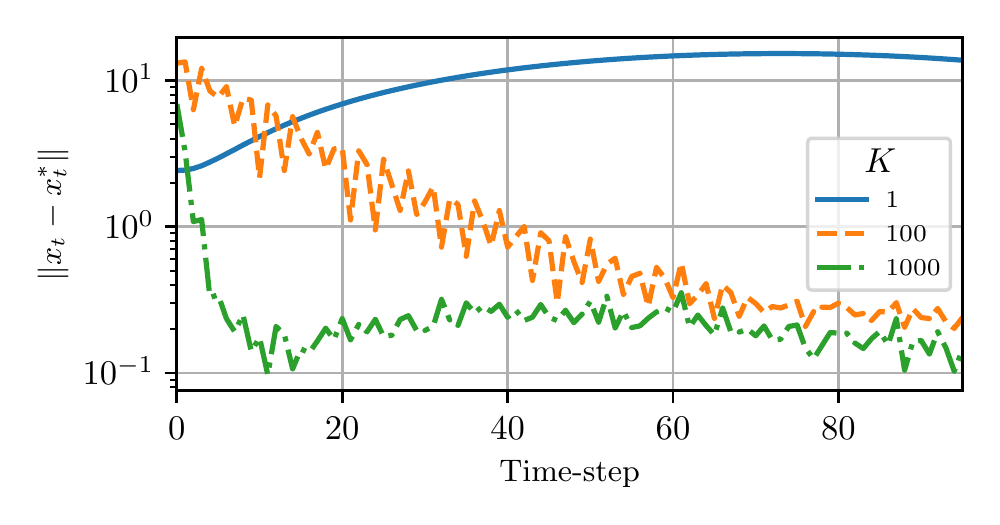}
    \caption{Tracking error of Algorithm \ref{algo:2} for the real-time market clearing problem with respect to the day-ahead solution computed by $10^5$ iterations of Algorithm \ref{algo:1}. } 
    \label{fig:2}
\end{figure}
\begin{figure}
    \centering
    \includegraphics[width=\columnwidth]{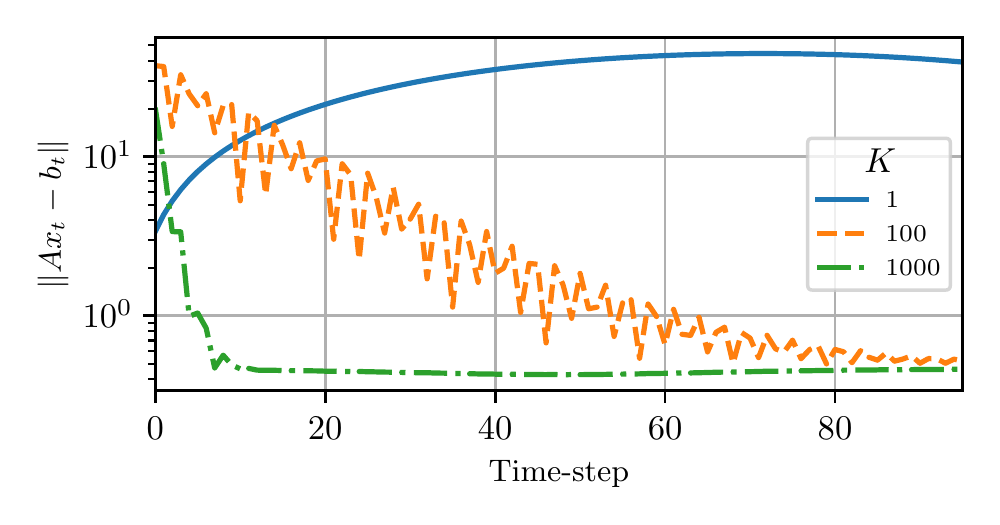}
    \caption{Constraint violation incurred by Algorithm \ref{algo:2} for the real-time market clearing problem. The  inequality constraints enforced through the barrier function $J^{\text{bar}}$ are observed to be always satisfied. }
    \label{fig:3}
\end{figure}
As the power balance constraints are local,  we do not apply the dynamic tracking method to the associated dual variables (i.e., dual variables are managed locally, see  \cite[Rem.~2]{Bianchi_ct_GNE_AUT_2021}). Instead, coupling constraints between the agents decisions arises via  trading reciprocity constraints \cite[Eq. 8]{Belgioioso_Energy_ECC_2020}:
\begin{equation}\label{eq:constraint_reciprocity}
    x_{i,j,t}^{\text{tr}} + x_{j,i,t}^{\text{tr}} = 0.
\end{equation}

We first consider a time-invariant scenario and  compute the day-ahead market clearing solution over an entire day, with  time-steps of 15 minutes: namely,  the cost of agent $i$ is given by $\sum_{t=1}^T J_{i}^t$, and the constraints in \eqref{eq:constraint_powerbalance}-\eqref{eq:constraint_reciprocity} are imposed for all $t = 1,2,\dots,T$, with $T =96$. 
 Figure \ref{fig:1} shows that, as expected,  Algorithm~\ref{algo:1} exhibits a linear convergence rate with respect to the Lyapunov function in \eqref{eq:Lyap}. 

Then, we consider a real-time  scenario. In particular,  the agents only have access to a prediction on their load demand and generation setpoint over the coming quarter of an hour; hence the cost of agent $i$ at each time $t = 1,2,\dots,T$ is given by $J_{i}^t$.  Note that the agents are in fact faced with a time-varying generalized game as discussed in Section~\ref{sec:timevarying}, which we address via Algorithm~\ref{algo:2}. 
The results are shown in Figure~\ref{fig:2}. 
Because of the slow convergence (i.e., $\eta$ is close to $1$), $K=1$ results in a significant tracking error; however, good performance is observed already for $K = 100$. Finally, in Figure \ref{fig:3}, we show the constraint violation obtained by the proposed method over the simulation horizon. As constraints are only satisfied asymptotically, performing only a finite number of iterations per time-step leads to a constraint violation, which as expected decreases with $K$.

\section{Conclusion}
Strongly monotone \gls{GNE} problems with full row rank  equality coupling constraints can be solved with linear convergence rate, both in semi-decentralized and fully-distributed settings, via primal-dual algorithms. The contractivity properties of the iterates also allow the  tracking of the solution sequence in time-varying games; in this online setting, the asymptotic tracking accuracy can be increased by increasing the update frequency.  

As our results exploit the strong monotonicity of the \gls{KKT} operator in a (non-diagonally) weighted space, it is not clear how to embed projections in the proposed methods, which is the main drawback of our approach. Future work should hence focus on linear convergence in generalized games with local constraints (and with inequality coupling constraints).

 
\bibliographystyle{IEEEtran}
\bibliography{library}

\end{document}